\date{\today}
\newcommand{\Z}{{\mathbb Z}}
\newcommand{\R}{{\mathbb R}}
\newcommand{\T}{{\mathbb T}}
\newtheorem{Definition}{Definition}
\newtheorem{theorem}{Theorem}[section]
\newtheorem{lemma}[theorem]{Lemma}
\newtheorem{coro}[theorem]{Corollary}
\theoremstyle{definition}
\newtheorem{remark}[theorem]{Remark}
\def\be{\begin{equation}}
\def\ee{\end{equation}}
\begin{document}

\title[Sums of Cantor sets and the Square Fibonacci Hamiltonian]{Sums of regular Cantor sets of large dimension and the Square Fibonacci Hamiltonian}

\author[D.\ Damanik]{David Damanik}

\address{Department of Mathematics, Rice University, Houston, TX~77005, USA}

\email{damanik@rice.edu}

\thanks{D.\ D.\ was supported in part by NSF grants DMS--1067988 and DMS--1361625.}

\author[A.\ Gorodetski]{Anton Gorodetski}

\address{Department of Mathematics, University of California, Irvine, CA~92697, USA}

\email{asgor@math.uci.edu}

\thanks{A.\ G.\ was supported in part by NSF grant DMS--1301515.} 

\begin{abstract}
We show that under natural technical conditions, the sum of a $C^2$ dynamically defined Cantor set with a compact set in most cases (for almost all parameters) has positive Lebesgue measure, provided that the sum of the Hausdorff dimensions of these sets exceeds one. As an application, we show that for many parameters, the Square Fibonacci Hamiltonian has spectrum of positive Lebesgue measure, while at the same time the density of states measure is purely singular.
\end{abstract}

\maketitle

\section{Introduction}\label{s.intro}

\subsection{Sums of dynamically defined Cantor sets}

The study of  the structure and the properties of sums of Cantor sets is motivated by applications in dynamical systems \cite{n1, n2, n3, PaTa}, number theory \cite{CF, Mal, Moreira}, harmonic analysis \cite{BM, BKMP}, and spectral theory \cite{EL06, EL07, EL08, Y}. In many cases {\it dynamically defined} Cantor sets are of special interest.

\begin{Definition}
A dynamically defined {\rm (}or regular{\rm )} Cantor set of class $C^r$ is a Cantor subset $C\subset\mathbb{R}$ of the real line such that there are disjoint compact intervals  $I_1, \ldots, I_l \subset \mathbb{R}$ and an expanding $C^k$ function $\Phi : I_1 \cup \cdots \cup I_l \to I$ from the disjoint union $I_1 \cup \cdots \cup I_l$ to its convex hull $I$ with
$$
C = \bigcap_{n\in\mathbb{N}}\Phi^{-n}(I).
$$
\end{Definition}

In the case when the restriction of the map $\Phi$ to each of the intervals $I_j, j=1, \ldots, l,$ is  affine, the corresponding Cantor set is also called {\it affine}. If all these affine maps have the same expansion rate (i.e., $|\Phi'(x)|=const$ for all $x\in I_1\cup\dots\cup I_l$), the Cantor set is called {\it homogeneous}.   A specific example of a homogeneous Cantor set, a {\it middle-$\alpha$} Cantor set $C_a$, is defined by $\Phi:[0, a]\cup [1-a, 1]\to [0,1]$, where $\Phi(x)=\frac{x}{a}$ for $x\in [0,a]$, and $\Phi(x)=\frac{x}{a}-\frac{1}{a}+1$ for $x\in [1-a,1]$. For example, $C_{1/3}$ is the standard middle-third Cantor set.

Considering the sum $C + C'$ of two Cantor sets $C,C'$, defined by
$$
C + C' = \{ c + c' : c \in C, \; c' \in C' \},
$$
it is not hard to show (see, e.g., Chapter 4 in \cite{PaTa}) that if the Cantor sets $C$ and $C'$ are dynamically defined, one has $\dim_\mathrm{H} (C + C') \le \min(\dim_\mathrm{H} C + \dim_\mathrm{H} C', 1)$. Hence in the case $\dim_\mathrm{H} C + \dim_\mathrm{H} C' < 1$, the sum $C + C'$ must be a Cantor set, and an interesting question here is whether the identity $\dim_\mathrm{H} (C + C') = \dim_\mathrm{H} C + \dim_\mathrm{H} C'$ holds. This question was addressed for homogeneous Cantor sets in \cite{PeSo} (see also \cite{NPS}), and some explicit criteria were provided in \cite{HS}.

In the case when $\dim_\mathrm{H} C + \dim_\mathrm{H} C' > 1$, a major result was obtained by Moreira and Yoccoz in \cite{MY}. They showed that for a {\it generic} pair of Cantor sets $(C, C')$ in this regime, the sum $C + C'$ contains an interval. The genericity assumptions there are quite non-explicit, and cannot be verified in a specific case. This does not allow one to apply this result when a specific pair or a specific family of Cantor sets is given (which is often the case in applications), which therefore motivates further investigations in this direction. For example, while \cite{MY} solves one part of the Palis conjecture on sums of Cantor sets (``generically the sum of two dynamically defined Cantor sets either has zero measure or contains an interval''), the second part of the conjecture (``generically the sum of two affine Cantor sets either has zero measure or contains an interval'') is still open.

An important characteristic of a Cantor set related to questions about intersections and sum sets is the {\it thickness}, usually denoted by $\tau(C)$. This notion was introduced by Newhouse in \cite{n1}; for a detailed discussion, see \cite{PaTa}. The famous {\it Newhouse Gap Lemma} asserts that if $\tau(C) \cdot \tau(C') > 1$, then $C + C'$ contains an interval. This allowed for essential progress in dynamics \cite{n2, n3, Du}, and found an application in number theory \cite{A}. Nevertheless, in some cases $\tau(C) \cdot \tau(C') < 1$, while $\dim_\mathrm{H} C + \dim_\mathrm{H} C' > 1$, and other arguments are needed.

In \cite{So97} Solomyak studied the sums $C_a + C_b$ of middle-$\alpha$ type Cantor sets. He showed that in the regime when $\dim_\mathrm{H} C + \dim_\mathrm{H} C' > 1$, for almost every pair of parameters $(a,b)$, one has $\mathrm{Leb}(C_a + C_b) > 0$. Similar results for sums of homogeneous Cantor sets (parameterized by the expansion rate) with a fixed compact set were obtained in \cite{PeSo}.

\medskip

In this paper we are able to work in far greater generality, and our first main result reads as follows:

\begin{theorem}\label{t.sums}
Let $\{C_\lambda\}$ be a family of dynamically defined Cantor sets of class $C^{2}$ {\rm (}i.e., $C_\lambda = C(\Phi_\lambda)$, where $\Phi_\lambda$ is an expansion of class $C^2$ both in $x\in \mathbb{R}$ and in $\lambda \in J=(\lambda_0, \lambda_1)${\rm )} such that $\frac{d}{d\lambda}\dim_\mathrm{H} C_\lambda \ne 0$ for $\lambda \in J$. Let $K\subset \mathbb{R}$ be a compact set such that
$$
\dim_\mathrm{H} C_\lambda + \dim_\mathrm{H} K > 1\ \ \ \text{for all}\ \ \ \lambda\in J.
$$
Then $\mathrm{Leb}(C_\lambda+K) > 0$ for a.e. $\lambda \in J$.
\end{theorem}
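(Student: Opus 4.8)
The plan is to follow the now-classical "potential-theoretic + transversality" strategy pioneered by Pollicott–Simon, Solomyak, and Peres–Solomyak. Fix a Gibbs (equilibrium) measure $\mu_\lambda$ on $C_\lambda$ of dimension close to $\dim_\mathrm{H} C_\lambda$, and a measure $\nu$ on $K$ with $\dim_\mathrm{H}\nu$ close to $\dim_\mathrm{H} K$ (Frostman), so that $\dim_\mathrm{H}\mu_\lambda + \dim_\mathrm{H}\nu > 1$ on $J$. Let $\eta_\lambda = \mu_\lambda * \nu$ be the pushforward of $\mu_\lambda\times\nu$ under addition; it is supported on $C_\lambda + K$. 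It suffices to show that for a.e.\ $\lambda\in J$ the measure $\eta_\lambda$ is absolutely continuous with respect to Lebesgue measure, and for this it is in turn enough to prove
$$
\int_{J'} \int_{\mathbb{R}} \underline{D}(\eta_\lambda, t)\, d\eta_\lambda(t)\, d\lambda < \infty
$$
on every compact $J'\subset J$, where $\underline{D}(\eta_\lambda,t) = \liminf_{r\downarrow 0}(2r)^{-1}\eta_\lambda(t-r,t+r)$ is the lower derivative; finiteness of this double integral forces $\underline{D}(\eta_\lambda,\cdot)<\infty$ $\eta_\lambda$-a.e.\ for a.e.\ $\lambda$, which gives $\eta_\lambda\ll\mathrm{Leb}$ and hence $\mathrm{Leb}(C_\lambda+K)>0$.

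By Fatou's lemma and Fubini, the displayed integral is bounded by $\liminf_{r\downarrow 0}(2r)^{-1}\iint \eta_\lambda\big((u-r,u+r)\big)\,d\eta_\lambda(u)\,d\lambda$, and unwinding the definition of $\eta_\lambda$ this is
$$
\liminf_{r\downarrow 0}(2r)^{-1}\int_{J'}\!\!\iiiint \mathbf{1}\big[\,|(x + y) - (x' + y')| < r\,\big]\; d\mu_\lambda(x)\,d\mu_\lambda(x')\,d\nu(y)\,d\nu(y')\,d\lambda .
$$
The inner $\lambda$-integral, for fixed $x,x',y,y'$ with $x\ne x'$, requires a \emph{transversality} estimate: one must show that as $\lambda$ varies, the quantity $g_\lambda(x,x') := \pi_\lambda(x) - \pi_\lambda(x')$ (where $\pi_\lambda$ is the natural coding map $\Sigma\to C_\lambda$) moves with nonvanishing speed, so that $\mathrm{Leb}\{\lambda\in J' : |g_\lambda(x,x') + (y-y')| < r\}\lesssim r/|x-x'|$ — or, more robustly, $\lesssim r/\mathrm{dist}(x,x')^{\text{(some power)}}$. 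Combined with the standard Gibbs estimate $\iint \mathrm{dist}(x,x')^{-s}\,d\mu_\lambda(x)\,d\mu_\lambda(x')<\infty$ for $s<\dim_\mathrm{H}\mu_\lambda$ and $\int\int \mathrm{dist}(y,y')^{-(1-s)}\,d\nu\,d\nu < \infty$ for $1-s < \dim_\mathrm{H}\nu$ (possible precisely because $\dim_\mathrm{H}\mu_\lambda + \dim_\mathrm{H}\nu > 1$), together with a Hölder-type bound interpolating the $x$ and $y$ separations, the $r$-dependence cancels and the whole expression is finite, uniformly as $r\downarrow 0$.

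The main obstacle, and where almost all the real work lies, is establishing the transversality condition — this is precisely where the hypothesis $\frac{d}{d\lambda}\dim_\mathrm{H} C_\lambda \ne 0$ enters. One does not have transversality for a fixed pair $(x,x')$ and all $\lambda$; rather, one must show that the map $\lambda\mapsto g_\lambda(x,x')$, suitably normalized (dividing by the scale $\mathrm{dist}(x,x')$), has a derivative in $\lambda$ that is bounded away from zero for $\lambda$ in any compact subinterval, uniformly in $(x,x')$. This is delicate because $\pi_\lambda$ is only as smooth as the $C^2$ data allows, and because the normalization by $\mathrm{dist}(x,x')$ interacts with the distortion of $\Phi_\lambda$. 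The standard device is to pass to renormalized coordinates: after applying an appropriate branch of $\Phi_\lambda^{-n}$ one reduces comparing $x$ and $x'$ at unit scale, invokes bounded distortion ($C^2$ suffices, via the Koebe-type distortion lemma for expanding $C^{1+\mathrm{Lip}}$ maps), and shows that the $\lambda$-derivative of the renormalized difference is comparable to $\frac{d}{d\lambda}\log|\Phi_\lambda'|$-type quantities, which is controlled by the nondegeneracy of $\frac{d}{d\lambda}\dim_\mathrm{H} C_\lambda$ via the thermodynamic formalism (the pressure equation $P(\lambda, -\dim_\mathrm{H} C_\lambda \log|\Phi_\lambda'|) = 0$ and implicit differentiation). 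Granting this transversality lemma, the rest of the argument is the routine Fubini/Fatou manipulation sketched above.
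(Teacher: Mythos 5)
Your overall architecture --- a Frostman measure $\nu$ on $K$, a Gibbs measure $\mu_\lambda$ on $C_\lambda$, the Peres--Solomyak $\liminf$-of-densities criterion, and a Fubini/transversality estimate --- is indeed the machinery that ultimately underlies this theorem: it is essentially the content of Theorem~3.7 of \cite{DGS}, which the paper invokes as a black box rather than reproving. The genuine gap is in the step you yourself flag as ``where almost all the real work lies''. You assert that the normalized difference $\lambda \mapsto g_\lambda(x,x')/\mathrm{dist}(x,x')$ has $\lambda$-derivative bounded away from zero \emph{uniformly in $(x,x')$}, and that this follows from $\frac{d}{d\lambda}\dim_{\mathrm{H}}C_\lambda \neq 0$ via the pressure equation. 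Neither half of this is right. Uniform transversality over all pairs fails in general: after renormalization, the $\lambda$-derivative of the difference is governed by a Birkhoff sum of $\frac{\partial}{\partial\lambda}\log|\Phi_\lambda'|$ along the common initial orbit segment of $x$ and $x'$, and for specific orbits this sum can vanish or change sign even when the hypothesis on $\frac{d}{d\lambda}\dim_{\mathrm{H}}C_\lambda$ holds. What is true, and what \cite{DGS} proves, is a \emph{measure-theoretic} transversality: for $\mu\times\mu$-most pairs the relevant Birkhoff average is close to its mean, which is $\frac{d}{d\lambda}\mathrm{Lyap}^u(\mu_\lambda)$, and the exceptional pairs are controlled by a large-deviation estimate. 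So the hypothesis that actually drives the argument is $\left|\frac{d}{d\lambda}\mathrm{Lyap}^u(\mu_\lambda)\right| \ge \delta > 0$ for the specific measure being convolved, not the nonvanishing of $\frac{d}{d\lambda}\dim_{\mathrm{H}}C_\lambda$ per se.

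The missing idea is therefore the bridge between these two hypotheses, and that bridge is precisely the paper's contribution. An arbitrary Gibbs measure of dimension merely close to $\dim_{\mathrm H}C_\lambda$ need not have a Lyapunov exponent with nonvanishing $\lambda$-derivative. The paper fixes a reference parameter $\lambda_0$, takes $\mu_{\lambda_0}$ to be the equilibrium measure of \emph{maximal} dimension, so that $\dim_{\mathrm H}\nu_{\lambda_0} = \dim_{\mathrm H}C_{\lambda_0}$ (McCluskey--Manning), and transports the \emph{same symbolic measure} $\mu$ to nearby parameters via the conjugacies, so that the entropy $h_{\mu_\lambda}$ is independent of $\lambda$. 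Manning's formula $\dim_{\mathrm H}\nu_\lambda = h_\mu / \mathrm{Lyap}^u(\mu_\lambda)$ then converts derivative information about $\dim_{\mathrm H}\nu_\lambda$ into derivative information about the Lyapunov exponent; and since $\dim_{\mathrm H}\nu_\lambda \le \dim_{\mathrm H}C_\lambda$ for all $\lambda$ with equality at $\lambda_0$, the two graphs are tangent there, giving $\frac{d}{d\lambda}\big|_{\lambda=\lambda_0}\dim_{\mathrm H}\nu_\lambda = \frac{d}{d\lambda}\big|_{\lambda=\lambda_0}\dim_{\mathrm H}C_\lambda \neq 0$, whence $\frac{d}{d\lambda}\mathrm{Lyap}^u(\mu_\lambda)$ is bounded away from zero on a neighborhood of $\lambda_0$ (and one concludes by covering $J$ by such neighborhoods). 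Without this choice of measure and the tangency argument, your appeal to ``the pressure equation and implicit differentiation'' does not produce any lower bound on the speed at which pairs of points separate, and the Fubini computation cannot be closed.
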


\begin{remark}
It would be interesting to relax the assumptions in Theorem \ref{t.sums} and to show that the same statement holds for $C^{1+\alpha}$ Cantor sets. We conjecture that this is indeed the case (possibly under some extra conditions on the dependence of $\Phi$ and $\frac{\partial}{\partial x}\Phi$ on $\lambda$). 
\end{remark}

In the case when the dynamically defined Cantor sets $\{C_\lambda\}$ are affine (or non-linear, but $C^2$-close to affine), a statement analogous to Theorem~\ref{t.sums} was obtained in \cite{GN}. The case of a sum of homogeneous (affine with the same contraction rate for each of the generators) Cantor sets with a dynamically defined Cantor set was considered in Theorem 1.4 in \cite{Shm}; in this case the set of exceptional parameters has zero Hausdorff dimension.

In order to put these results in perspective, we summarize them in the following table. (Below we always set $d_1 = \dim_\mathrm{H} C$ (or $d_1 = \dim_\mathrm{H} C_\lambda$) and $d_2 = \dim_\mathrm{H} K$. In the case $d_1 + d_2 < 1$ we ask whether it is true that $\dim_\mathrm{H} (C+K) = d_1 + d_2$, and in the case $d_1 + d_2 > 1$ we ask whether it is true that $C + K$ contains an interval, and if this is unknown, whether $\mathrm{Leb} (C + K) > 0$.)

\begin{center}

\section*{\bf What is known about sums of dynamically defined Cantor sets:}

\

{\small
\begin{tabular}{|l|c|c|c|}
            \hline
             \text{\ } & \text{\ }  & \text{\ }    \\
            \ \ \ \  \ \    & \ \  $d_1+d_2<1$   \  \  & \ \   $d_1+d_2>1$   \ \    \\
               \text{  } & \text{ } & \text{ }   \\
            \hline
             \text{ }  & \text{ }    & \text{ } \\
           \ \begin{minipage}{4cm} For a $C^r$ generic pair of Cantor sets $(C, K)$ \end{minipage}  & Yes, follows from \cite{HS}.  &   $C+K$ contains an interval \cite{MY}  \\
              \text{ }  & \text{ }    & \text{ } \\
              \hline
             \text{ } & \text{ }    & \text{ } \\
           \ \begin{minipage}{4cm} Given a family $\{C_\lambda\}_{\lambda \in J}$ such that $\frac{d}{d\lambda}(\dim_\mathrm{H} C_\lambda) \ne 0$, and a compact $K \subset \mathbb{R}$ \end{minipage} & \begin{minipage}{4cm} Yes, for a.e. parameter, follows from \cite{HS}. \end{minipage} & \begin{minipage}{4cm} $\mathrm{Leb}(C_\lambda + K) > 0$ for a.e.\ $\lambda$, this paper \end{minipage} \\
              \text{ } & \text{ }    & \text{ } \\
                        \hline
                          \text{ } & \text{ }    & \text{ } \\
           \ \begin{minipage}{4cm} For a generic pair of {\bf affine} Cantor sets $(C, K)$ \end{minipage}  & Yes, follows from \cite{HS}.  & \begin{minipage}{4cm} $\mathrm{Leb}(C+K) > 0$ \cite{GN}; whether $C+K$ contains an interval is an open part of the Palis conjecture \end{minipage}  \\
              \text{ } & \text{ }    & \text{ } \\
                        \hline
                          \text{ } & \text{ }    & \text{ } \\
           \ \begin{minipage}{4cm} Given a family $\{C_\lambda\}_{\lambda\in J}$ of {\bf homogeneous} self-similar Cantor sets and a compact $K\subset \mathbb{R}$  \end{minipage} & Yes, for a.e. parameter,  \cite{PeSo}  &  \begin{minipage}{4cm} $\mathrm{Leb}(C_\lambda + K) > 0$ for a.e. $\lambda$ \cite{PeSo} (see also \cite{Shm}); whether $C_\lambda+K$ contains an interval for a.e. parameter is unknown \end{minipage}    \\
              \text{ } & \text{ }    & \text{ } \\
                        \hline
                          \text{ } & \text{ }    & \text{ } \\
           \ \begin{minipage}{4cm} Given a family $\{C_a\}_{a\in (0, 1/2)}$ of {\bf middle-$\alpha$} Cantor sets, take $K=C_b$ for some $b\in (0, 1/2)$ \end{minipage} & \begin{minipage}{4cm} Yes, with countable number of exceptions, \cite{PeShm} \end{minipage}  & \begin{minipage}{4cm} $\mathrm{Leb}(C_a+K)>0$, for a.e. parameter $a$, \cite{So97}; whether $C_a+K$ contains an interval for a.e. parameter is unknown. \end{minipage}     \\
              \text{ } & \text{ }    & \text{ } \\
                        \hline
                         \text{ } & \text{ }    & \text{ } \\
           \ \begin{minipage}{4cm} For a specific fixed pair of dynamically defined Cantor sets $(C, K)$ \end{minipage} & \begin{minipage}{4cm} Explicit conditions are given in \cite{HS} \end{minipage}  & \begin{minipage}{4cm} There exists an example of two dynamically defined Cantor sets such that $C+K$ is a Cantor set of positive measure, \cite{Sa}. No verifiable criteria are known when thickness arguments do not help. \end{minipage}  \\
              \text{ } & \text{ }    & \text{ } \\
                        \hline

        \end{tabular}

}
\end{center}

\

One should also mention the results \cite{LM, Mas, PeSch, SS} in the spirit of Marstrand's theorem on properties of sum sets of the form $C_1 + \lambda C_2$ (which is equivalent to the projection of the product $C_1 \times C_2$ along the line of slope $\lambda$).

\medskip

In many applications a dynamically defined Cantor sets appears as the intersection of the stable lamination of some hyperbolic horseshoe with a transversal. More specifically, suppose that $f : M^2 \to M^2$ is a $C^r$-diffeomorphism, $r \ge 2$, and $\Lambda \subset M^2$ is a hyperbolic horseshoe (i.e., a totally disconnected locally maximal invariant compact set such that there exists an invariant splitting $T_\Lambda = E^s \oplus E^u$ so that along the stable subbundle $\{E^s\}$, the differential $Df$ contracts uniformly, and along $\{E^u\}$, the differential of the inverse $Df^{-1}$ contracts uniformly). Then
$$
W^s(\Lambda) = \{ x \in M^2 : \mathrm{dist} \, (f^n(x), \Lambda) \to 0\ \text{as}\ n \to +\infty\}
$$
consists of stable manifolds $W^s(\Lambda) = \bigcup_{x \in \Lambda} W^s(x)$ and locally looks like a product of a Cantor set with an interval. If $f = f_{\lambda^*} \in \{f_\lambda\}_{\lambda \in J = (\lambda_0, \lambda_1)}$ is an element of a smooth family of diffeomorphisms, then there exists a family of horseshoes $\{ \Lambda_\lambda \}, f_\lambda(\Lambda_\lambda) = \Lambda_\lambda$, for parameters $\lambda$ sufficiently close to the initial $\lambda^* \in J$. Suppose that $L \subset M^2$ is a line transversal to every leaf in $W^s(\Lambda_\lambda)$, $\lambda \in J$, with compact intersection $L \cap W^s(\Lambda_\lambda)$. The intersection $C_\lambda = L \cap W^s(\Lambda_\lambda)$ is a $\lambda$-dependent dynamically defined Cantor set. The lamination $\{ W^s(x) \}$ consists of $C^r$ leaves, but in general one cannot include it in a foliation of smoothness better than $C^{1+\alpha}$ (even for $C^\infty$ or real analytic $f$). That justifies the traditional assumption on $C^{1+\alpha}$ smoothness of generators of a dynamically defined Cantor set\footnote{Notice that $C^1$-smoothness is usually too weak since it does not allow one to use distortion property arguments; see  \cite{Moreira11, U95}  for some results on sums of $C^1$ Cantor sets.}. This prevents us from using Theorem \ref{t.sums} in the context above. Nevertheless, the analog of Theorem~\ref{t.sums} holds for families of Cantor sets $\{C_{\lambda}\}$ obtained via the described construction:

\begin{theorem}\label{t.sumhyp}
Suppose that $\{ f_\lambda \}_{\lambda \in J = (\lambda_0, \lambda_1)}$, $f_\lambda : M^2 \to M^2$, is a $C^2$-family of $C^2$-diffeomorphisms with uniformly {\rm (}in $\lambda${\rm )} bounded $C^2$ norms. Let $\{ \Lambda_\lambda \}_{\lambda \in J}$ be a family of hyperbolic horseshoes, and  $\{ L_\lambda \}_{\lambda \in J}$ be a smooth family of curves parameterized by $\gamma_\lambda : \mathbb{R} \to M^2$, transversal to $W^s(\Lambda_\lambda)$, with compact $C_\lambda = \gamma_\lambda^{-1}(L_\lambda \cap W^s(\Lambda_\lambda))$. Assume that
\begin{equation}\label{e.cond1}
\frac{d}{d\lambda} \dim_\mathrm{H} C_\lambda \ne 0 \ \text{for all}\ \ \lambda \in J.
\end{equation}
If $K \subset \mathbb{R}$ is a compact set such that
\begin{equation}\label{e.cond2}
\dim_\mathrm{H} C_\lambda + \dim_\mathrm{H} K > 1 \ \ \ \text{for all}\ \ \ \lambda\in J,
\end{equation}
then $\mathrm{Leb} (C_\lambda + K) > 0$ for Lebesgue almost every $\lambda\in J$.
\end{theorem}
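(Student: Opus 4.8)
The plan is to reduce Theorem~\ref{t.sumhyp} to the proof of Theorem~\ref{t.sums}. The point is that the argument establishing Theorem~\ref{t.sums} does not really use the full $C^2$ regularity of the generating map $\Phi_\lambda$ in the space variable $x$: what it uses is (i) a bounded distortion property for $\Phi_\lambda$, for which $C^{1+\alpha}$ regularity in $x$ is enough, with distortion constants uniform in $\lambda$; (ii) $C^1$ control, uniform in $x$, of $\Phi_\lambda$ and of $\partial_x\Phi_\lambda$ as functions of $\lambda$; (iii) the nondegeneracy $\frac{d}{d\lambda}\dim_\mathrm{H} C_\lambda\ne 0$; and (iv) $\dim_\mathrm{H} C_\lambda+\dim_\mathrm{H} K>1$. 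So it suffices to equip the Cantor sets $C_\lambda$ produced by the horseshoe construction with a dynamically defined structure enjoying (i) and (ii)---properties (iii) and (iv) being exactly the hypotheses \eqref{e.cond1} and \eqref{e.cond2}---and then repeat the proof of Theorem~\ref{t.sums} with essentially no changes.

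First I would produce the dynamically defined structure in the usual way. Fix $\lambda^*\in J$, a finite Markov partition $\{R_i\}$ of the horseshoe $\Lambda_{\lambda^*}$, and the associated stable strips. By structural stability and the hyperbolic continuation of $\Lambda_\lambda$, there is a subinterval $J'\ni\lambda^*$ such that for $\lambda\in J'$ the horseshoe $\Lambda_\lambda$ carries a Markov partition with the \emph{same} combinatorics (the same subshift of finite type $\Sigma$), with stable strips $H_i(\lambda)$ depending $C^2$-smoothly on $\lambda$. Set $I_i(\lambda)=\gamma_\lambda^{-1}(L_\lambda\cap H_i(\lambda))$; these are disjoint compact intervals whose endpoints depend $C^2$-smoothly on $\lambda$, and (after the usual passage to a power or an induced return map, exactly as in the $C^2$ setting, in order to get full branches to a convex hull) there is an expanding map $\Phi_\lambda$ on $\bigcup_i I_i(\lambda)$ with $C(\Phi_\lambda)=C_\lambda$. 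Concretely, $\Phi_\lambda=\gamma_\lambda^{-1}\circ\pi^s_\lambda\circ f_\lambda\circ\gamma_\lambda$, where $\pi^s_\lambda$ is the holonomy onto $L_\lambda$ along the stable lamination $W^s(\Lambda_\lambda)$; on the level of the coding, $\Phi_\lambda$ is just the shift on $\Sigma$.

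The hard part will be verifying property (ii). The stable holonomy $\pi^s_\lambda$ is in general only $C^{1+\alpha}$ in $x$---this is precisely the obstruction noted before the statement of the theorem---so $\Phi_\lambda$ is $C^{1+\alpha}$, but no better, in $x$; still, this yields bounded distortion, with constants uniform in $\lambda\in J'$ because $\{f_\lambda\}$ consists of $C^2$-diffeomorphisms with uniformly bounded $C^2$ norms and uniform hyperbolicity. What is less routine is that $\Phi_\lambda$ \emph{and} $\partial_x\Phi_\lambda$ are $C^1$ (indeed $C^2$) in $\lambda$, with bounds uniform in $x$ and $\lambda\in J'$. For this I would use the classical infinite product and series representations of $\pi^s_\lambda$ and of $\partial_x\pi^s_\lambda$ at a point: their terms are built from $Df_\lambda$ and $D^2f_\lambda$ evaluated along forward orbits in $\Lambda_\lambda$, hence are $C^1$ in $\lambda$ (the family is $C^2$ in $\lambda$ and in $x$), and the series converge exponentially and uniformly by uniform hyperbolicity; differentiating term by term in $\lambda$ and invoking the uniform $C^2$ bounds gives the desired uniform $C^1$-in-$\lambda$ control of $\Phi_\lambda$ and $\partial_x\Phi_\lambda$. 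This is exactly the sort of ``extra condition on the dependence of $\Phi$ and $\frac{\partial}{\partial x}\Phi$ on $\lambda$'' alluded to in the Remark after Theorem~\ref{t.sums}.

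With (i)--(iv) in hand, the conclusion follows as in Theorem~\ref{t.sums}. Let $\mathbb{P}$ be a Gibbs measure on $\Sigma$ (for instance one realizing the dimension), $\pi_\lambda\colon\Sigma\to\mathbb{R}$ the coding map of $C_\lambda$, $\mu_\lambda=(\pi_\lambda)_*\mathbb{P}$, and let $\nu$ be a Frostman measure on $K$ of exponent $>1-\min_{\lambda\in J'}\dim_\mathrm{H} C_\lambda$, which exists by \eqref{e.cond2} and continuity of $\lambda\mapsto\dim_\mathrm{H} C_\lambda$ on $J'$. One estimates the $\lambda$-averaged $L^2$ (energy) norm of the convolution $\mu_\lambda*\nu$: the contribution of pairs of codings with a long common prefix is controlled by bounded distortion and the Gibbs property, and the contribution of pairs with a short common prefix is controlled by a transversality estimate for the maps $\lambda\mapsto\pi_\lambda(\omega)-\pi_\lambda(\omega')$, rescaled by the size of the relevant cylinder; this last estimate is where \eqref{e.cond1} enters, exactly as in Theorem~\ref{t.sums}, since $\frac{d}{d\lambda}\dim_\mathrm{H} C_\lambda\ne 0$ prevents the $\lambda$-derivative from degenerating on a parameter set of positive measure. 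Finiteness of the averaged energy forces $\mu_\lambda*\nu\ll\mathrm{Leb}$ for a.e.\ $\lambda\in J'$; as $\mu_\lambda*\nu$ is a nonzero measure supported on $C_\lambda+K$, this gives $\mathrm{Leb}(C_\lambda+K)>0$ for a.e.\ $\lambda\in J'$. Covering $J$ by countably many such intervals $J'$ completes the proof. The only genuinely new ingredient beyond Theorem~\ref{t.sums} and standard hyperbolic dynamics (persistence and smooth dependence of Markov partitions) is the uniform $C^1$-in-$\lambda$ regularity of $\Phi_\lambda$ and $\partial_x\Phi_\lambda$ established above, and this is the step I expect to require the most care.
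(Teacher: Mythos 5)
There is a genuine gap, and it sits exactly at the point your sketch passes over most quickly. Your plan is to realize $C_\lambda$ as a $C^{1+\alpha}$ dynamically defined Cantor set and then run a Peres--Solomyak type averaged $L^2$/energy argument, with the ``short common prefix'' pairs controlled by a transversality estimate for $\lambda \mapsto \pi_\lambda(\omega)-\pi_\lambda(\omega')$, which you claim ``is where \eqref{e.cond1} enters, exactly as in Theorem~\ref{t.sums}.'' But there is no independent proof of Theorem~\ref{t.sums} to import this from --- in the paper Theorem~\ref{t.sums} is \emph{deduced} from Theorem~\ref{t.sumhyp}, not the other way around --- and, more importantly, the non-vanishing of $\frac{d}{d\lambda}\dim_\mathrm{H} C_\lambda$ does not in any direct way yield a lower bound on $\frac{\partial}{\partial\lambda}\bigl(\pi_\lambda(\omega)-\pi_\lambda(\omega')\bigr)$ for individual pairs of codings. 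The derivative of the dimension is a single averaged quantity, while transversality is a statement about every pair $(\omega,\omega')$; converting the former into something usable is the central difficulty of the whole problem, and your proposal supplies no mechanism for it. (A secondary issue: the naive energy method also needs correlation-dimension-type lower bounds for the Gibbs measure realizing the dimension, which do not follow from its Hausdorff dimension alone.)

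The paper's route is different and circumvents transversality entirely. It takes the equilibrium measure $\mu_{\lambda_0}$ for the potential $-\dim_\mathrm{H} C_{\lambda_0}\log|Df_{\lambda_0}|_{E^u}|$ (the measure of maximal unstable dimension, by \cite{MM}), transports it along the family by the conjugacies, and observes that by Manning's formula \cite{Man}, $\mathrm{Lyap}^u(\mu_\lambda)=h_\mu/\dim_\mathrm{H}\nu_\lambda$ with $h_\mu$ constant in $\lambda$. Since $\dim_\mathrm{H}\nu_\lambda\le\dim_\mathrm{H} C_\lambda$ with equality at $\lambda_0$, the two functions are tangent at $\lambda_0$, so \eqref{e.cond1} forces $\frac{d}{d\lambda}\dim_\mathrm{H}\nu_\lambda\ne 0$, hence $\frac{d}{d\lambda}\mathrm{Lyap}^u(\mu_\lambda)\ne 0$, on a neighborhood of $\lambda_0$. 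That is precisely the hypothesis of Theorem~\ref{t.ac2} (Theorem~3.7 of \cite{DGS}), which is then applied to $\nu_\lambda$ and a Frostman measure $\eta$ on $K$ (exact-dimensionality of $\eta$ being relaxed to the Frostman bound \eqref{FrostmanMainC}). All of the analytic work your energy argument would have to do is contained in the proof of that cited theorem, whose hypothesis is the derivative of the Lyapunov exponent, not of the dimension; the translation between the two is the actual content of the paper's proof and is absent from your proposal.
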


Notice that Theorem~\ref{t.sumhyp} implies Theorem~\ref{t.sums}. Indeed, under the assumptions of Theorem~\ref{t.sums} one can construct a family of horseshoes and curves as in Theorem~\ref{t.sumhyp} that produce the same family of Cantor sets $\{C_\lambda\}$. We prove Theorem~\ref{t.sumhyp} in Section~\ref{s.sumproof}.

\subsection{An Application to the Square Fibonacci Hamiltonian}

The square Fibonacci Hamiltonian is the bounded self-adjoint operator
\begin{multline}\nonumber
[H^{(2)}_{\lambda_1, \lambda_2, \omega_1, \omega_2} \psi] (m,n) =  \psi(m+1,n) + \psi(m-1,n) + \psi(m,n+1) + \psi(m,n-1) + \\  + \Big(\lambda_1  \chi_{[1-\alpha , 1)}(m\alpha + \omega_1 \!\!\!\!\!\! \mod 1) + \lambda_2 \chi_{[1-\alpha , 1)}(n\alpha + \omega_2 \!\!\!\!\!\! \mod 1) \Big) \psi(m,n)
\end{multline}
in $\ell^2(\Z^2)$, with $\alpha=\frac{\sqrt{5}-1}{2}$, coupling constants $\lambda_1, \lambda_2 > 0$ and phases $\omega_1, \omega_2 \in \T = \R/\Z$. The standard Fibonacci Hamiltonian is the bounded self-adjoint operator
$$
[H^{(1)}_{\lambda, \omega} \psi] (n) =  \psi(n+1) + \psi(n-1) + \lambda_1  \chi_{[1-\alpha , 1)}(n\alpha + \omega_1 \!\!\! \mod 1) \psi(n)
$$
in $\ell^2(\Z)$, again with the coupling constant $\lambda > 0$ and the phase $\omega \in \T$. For a recent survey of the spectral theory of the Fibonacci Hamilonian and the square Fibonacci Hamiltonian, see \cite{DEG}.

Using the minimality of an irrational rotation and strong operator convergence, one can readily see that the spectra of these operators are phase-independent. That is, there are compact subsets $\Sigma_\lambda$ and $\Sigma_{\lambda_1, \lambda_2}$ of $\R$ such that
\begin{align*}
\sigma(H^{(1)}_{\lambda, \omega}) & = \Sigma_\lambda \quad \; \; \quad \text{ for every } \omega \in \T, \\
\sigma(H^{(2)}_{\lambda_1, \lambda_2, \omega_1, \omega_2}) & = \Sigma_{\lambda_1, \lambda_2} \quad \text{ for every } \omega_1, \omega_2 \in \T.
\end{align*}

The density of states measures associated with these operator families are defined as follows,
$$
\int_\R g(E) \, d\nu_{\lambda_1, \lambda_2}(E) = \int_{\T} \int_\T \langle \delta_0 , g(H^{(2)}_{\lambda_1, \lambda_2, \omega_1, \omega_2}) \delta_0 \rangle_{\ell^2(\Z^2)} \, d\omega_1 \, d\omega_2.
$$
and
$$
\int_\R g(E) \, d\nu_\lambda(E) = \int_{\T} \langle \delta_0 , g(H^{(1)}_{\lambda, \omega}) \delta_0 \rangle_{\ell^2(\Z)} \, d\omega.
$$
It is a standard result from the theory of ergodic Schr\"odinger operators that $\Sigma_\lambda = \mathrm{supp} \, \nu_\lambda$ and $\Sigma_{\lambda_1, \lambda_2} = \mathrm{supp} \, \nu_{\lambda_1, \lambda_2}$, where $\mathrm{supp} \, \nu$ denotes the topological support of the measure $\nu$.

The theory of separable operators (see, e.g., \cite[Appendix]{DGS} and \cite[Sections~II.4 and VIII.10]{RS}) quickly implies that
\begin{equation}\label{e.sumofspectra}
\Sigma_{\lambda_1, \lambda_2} = \Sigma_{\lambda_1} + \Sigma_{\lambda_2}
\end{equation}
and
\begin{equation}\label{e.convolutionofdos}
\nu_{\lambda_1, \lambda_2} = \nu_{\lambda_1} \ast \nu_{\lambda_2},
\end{equation}
where the convolution of measures is defined by
$$
\int_\R g(E) \, d(\mu \ast \nu)(E) = \int_\R \int_\R g(E_1 + E_2) \, d\mu(E_1) \, d\nu(E_2).
$$

It was shown in \cite{DGY} that for every $\lambda > 0$, the set $\Sigma_\lambda$ is a dynamically defined Cantor set (see \cite{Can, Cas, DG09} for earlier partial results for sufficiently small or large values of $\lambda$). In particular, its box counting dimension exists, coincides with its Hausdorff dimension, and the common value belongs to $(0,1)$. As was pointed out above, a particular consequence of this is that if $(\lambda_1,\lambda_2) \in \R_+^2$ is such that $\dim_\mathrm{H} \Sigma_{\lambda_1} + \dim_\mathrm{H} \Sigma_{\lambda_2} < 1$, then $\Sigma_{\lambda_1, \lambda_2}$ has zero Lebesgue measure. Here we are able to prove the following companion result:

\begin{theorem}\label{t.positivemeasureSPEC}
Suppose that for all pairs $(\lambda_1,\lambda_2)$ in some open set $U \subset \R_+^2$, we have $\dim_\mathrm{H} \Sigma_{\lambda_1} + \dim_\mathrm{H} \Sigma_{\lambda_2} > 1$. Then, for Lebesgue almost all pairs $(\lambda_1,\lambda_2) \in U$, $\Sigma_{\lambda_1, \lambda_2}$ has positive Lebesgue measure.
\end{theorem}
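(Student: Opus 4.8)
The plan is to deduce Theorem~\ref{t.positivemeasureSPEC} from Theorem~\ref{t.sumhyp} via the sum‑of‑spectra identity \eqref{e.sumofspectra}, $\Sigma_{\lambda_1,\lambda_2}=\Sigma_{\lambda_1}+\Sigma_{\lambda_2}$. Fixing the second coupling constant $\lambda_2$, one regards $\lambda_1\mapsto\Sigma_{\lambda_1}$ as a family of Cantor sets cut out by a transversal from the stable lamination of a horseshoe, takes the fixed compact set of Theorem~\ref{t.sumhyp} to be $K=\Sigma_{\lambda_2}$, obtains $\mathrm{Leb}(\Sigma_{\lambda_1}+\Sigma_{\lambda_2})>0$ for a.e.\ $\lambda_1$, and finally integrates over $\lambda_2$ with Fubini.

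The first step is to recall the dynamical model behind \cite{DGY}. The Fibonacci trace map $T(x,y,z)=(2xy-z,x,y)$ preserves each Fricke--Vogt surface $S_\lambda=\{x^2+y^2+z^2-2xyz=1+\lambda^2/4\}$; the spectrum $\Sigma_\lambda$ is exactly the set of energies $E$ for which the line $\ell_\lambda$ through the points $(\tfrac{E-\lambda}{2},\tfrac{E}{2},1)$ meets the stable lamination $W^s(\Omega_\lambda)$ of the relevant hyperbolic set $\Omega_\lambda\subset S_\lambda$, and $\ell_\lambda$ is transversal to $W^s(\Omega_\lambda)$ for every $\lambda>0$. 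Since $T$, $S_\lambda$, $\Omega_\lambda$ and $\ell_\lambda$ all depend real‑analytically on $\lambda$, one may, for $\lambda_1$ in a small window, identify the nearby surfaces $S_\lambda$ with a fixed $2$‑manifold (they are smooth surfaces varying analytically in the relevant region) and thereby realize $\{\Sigma_{\lambda_1}\}$, locally in $\lambda_1$, exactly as a family $C_{\lambda_1}=\gamma_{\lambda_1}^{-1}(L_{\lambda_1}\cap W^s(\Lambda_{\lambda_1}))$ of the type in Theorem~\ref{t.sumhyp} (the trace map playing the role of $f_{\lambda_1}$, $\Omega_{\lambda_1}$ that of $\Lambda_{\lambda_1}$, $\ell_{\lambda_1}$ that of $L_{\lambda_1}$), with $C^2$ norms bounded uniformly over the window. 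Using Theorem~\ref{t.sumhyp} rather than Theorem~\ref{t.sums} here is what lets us bypass the fact that the stable lamination of a surface horseshoe is in general no smoother than $C^{1+\alpha}$.

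It remains to check the hypotheses \eqref{e.cond1} and \eqref{e.cond2}. Condition \eqref{e.cond2} is precisely the standing assumption $\dim_\mathrm{H}\Sigma_{\lambda_1}+\dim_\mathrm{H}\Sigma_{\lambda_2}>1$ on $U$. For the non‑degeneracy \eqref{e.cond1} I would invoke that $\lambda\mapsto\dim_\mathrm{H}\Sigma_\lambda$ depends real‑analytically on $\lambda$ on $(0,\infty)$ (a consequence of the real‑analytic dependence of $\Omega_\lambda$ on $\lambda$ together with Bowen's pressure equation from the thermodynamic formalism) together with the fact that it is non‑constant, since $\dim_\mathrm{H}\Sigma_\lambda\to1$ as $\lambda\downarrow0$ and $\dim_\mathrm{H}\Sigma_\lambda\to0$ as $\lambda\to\infty$; by the identity theorem its derivative then vanishes only on a discrete, hence Lebesgue‑null, set $Z\subset(0,\infty)$. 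Now fix $\lambda_2$ with $U_{\lambda_2}:=\{\lambda_1:(\lambda_1,\lambda_2)\in U\}\ne\emptyset$ and cover the open set $U_{\lambda_2}\setminus Z$ by countably many open intervals $J_m\subset U_{\lambda_2}$ with $J_m\cap Z=\emptyset$ that are small enough for the identification of the preceding paragraph. On each such $J_m$ both \eqref{e.cond1} and \eqref{e.cond2} hold, so Theorem~\ref{t.sumhyp} yields $\mathrm{Leb}(\Sigma_{\lambda_1}+\Sigma_{\lambda_2})>0$ for a.e.\ $\lambda_1\in J_m$; as $Z$ is null and the $J_m$ exhaust $U_{\lambda_2}\setminus Z$, the set $\{\lambda_1\in U_{\lambda_2}:\mathrm{Leb}(\Sigma_{\lambda_1,\lambda_2})=0\}$ is Lebesgue‑null for this $\lambda_2$.

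Finally, set $E=\{(\lambda_1,\lambda_2)\in U:\mathrm{Leb}(\Sigma_{\lambda_1,\lambda_2})=0\}$. Because $(\lambda_1,\lambda_2)\mapsto\Sigma_{\lambda_1,\lambda_2}=\Sigma_{\lambda_1}+\Sigma_{\lambda_2}$ is continuous in the Hausdorff metric, $(\lambda_1,\lambda_2)\mapsto\mathrm{Leb}(\Sigma_{\lambda_1,\lambda_2})$ is upper semicontinuous, hence Borel; so $E$ is measurable and Fubini gives $\mathrm{Leb}_2(E)=\int\mathrm{Leb}_1(E_{\lambda_2})\,d\lambda_2=0$ by the previous paragraph, which is exactly the assertion of Theorem~\ref{t.positivemeasureSPEC}. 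The step I expect to be the crux is the non‑degeneracy \eqref{e.cond1}: it hinges on knowing that $\dim_\mathrm{H}\Sigma_\lambda$ varies with $\lambda$ real‑analytically (so that its critical set cannot have positive measure), which in turn rests on the trace‑map model for $\Sigma_\lambda$ and the thermodynamic formalism. Once that input is granted, the remainder is a direct application of Theorem~\ref{t.sumhyp} together with Fubini.
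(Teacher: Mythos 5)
Your proposal is correct and follows essentially the same route as the paper: fix $\lambda_2$, take $K=\Sigma_{\lambda_2}$, realize $\{\Sigma_{\lambda_1}\}$ locally via the trace map as a family of the type in Theorem~\ref{t.sumhyp} (identifying the nearby surfaces $S_\lambda$ with a fixed one), use analyticity of $\lambda_1\mapsto\dim_\mathrm{H}\Sigma_{\lambda_1}$ to subdivide into intervals where \eqref{e.cond1} holds, apply Theorem~\ref{t.sumhyp}, and finish with Fubini. Your treatment is in fact slightly more careful than the paper's on two minor points it leaves implicit: the non-constancy of $\dim_\mathrm{H}\Sigma_\lambda$ (needed so the critical set is discrete) and the measurability of the exceptional set for the Fubini step.
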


Combining results from \cite{DGS} and \cite{DGY}, it follows that for Lebesgue almost all pairs $(\lambda_1,\lambda_2) \in \R_+^2$ in the region where $\dim_\mathrm{H} \nu_{\lambda_1} + \dim_\mathrm{H} \nu_{\lambda_2} > 1$, the measure $\nu_{\lambda_1, \lambda_2}$ is absolutely continuous. We are also able here to prove a companion result for the latter statement:

\begin{theorem}\label{t.singularIDS}
Suppose that $\dim_\mathrm{H} \nu_{\lambda_1} + \dim_\mathrm{H} \nu_{\lambda_2} < 1$. Then, $\nu_{\lambda_1, \lambda_2}$ is singular, that is, it is supported by a set of zero Lebesgue measure.
\end{theorem}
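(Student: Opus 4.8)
The plan is to derive Theorem~\ref{t.singularIDS} from the convolution identity \eqref{e.convolutionofdos} together with the known fact that the one-dimensional density of states measure $\nu_\lambda$ is exact-dimensional. Since $\nu_{\lambda_1,\lambda_2}=\nu_{\lambda_1}\ast\nu_{\lambda_2}$, and since any Borel probability measure on $\R$ of Hausdorff dimension strictly less than $1$ is carried by a set of $\mathcal{H}^1$-measure (hence Lebesgue measure) zero, the whole statement reduces to proving
\[
\dim_\mathrm{H}\bigl(\nu_{\lambda_1}\ast\nu_{\lambda_2}\bigr)<1.
\]

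To establish this, recall that for every $\lambda>0$ the measure $\nu_\lambda$ is exact-dimensional (see \cite{DGY}; see also \cite{DGS}): the limit $\lim_{r\to0}\frac{\log\nu_\lambda(B(E,r))}{\log r}$ exists for $\nu_\lambda$-a.e.\ $E$ and equals the $\nu_\lambda$-a.e.\ constant $\dim_\mathrm{H}\nu_\lambda=:d_\lambda$. By hypothesis $d_{\lambda_1}+d_{\lambda_2}<1$. Write $\mu_j=\nu_{\lambda_j}$ and $d_j=d_{\lambda_j}$, let $X_j=\{x:\lim_{r\to0}\frac{\log\mu_j(B(x,r))}{\log r}=d_j\}$ (so $\mu_j(X_j)=1$), and let $\pi:\R^2\to\R$, $\pi(x,y)=x+y$, so that $\mu_1\ast\mu_2=\pi_*(\mu_1\times\mu_2)$. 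Since $X_1\times X_2\subseteq\pi^{-1}(X_1+X_2)$ and $(\mu_1\times\mu_2)(X_1\times X_2)=1$, the set $X_1+X_2$ carries $\mu_1\ast\mu_2$. Now fix $z\in X_1+X_2$, write $z=x+y$ with $x\in X_1$, $y\in X_2$, and note that $B(x,r/2)\times B(y,r/2)\subseteq\pi^{-1}(B(z,r))$ for every $r>0$, whence
\[
(\mu_1\ast\mu_2)(B(z,r))\ \ge\ \mu_1(B(x,r/2))\,\mu_2(B(y,r/2)).
\]
Given $\eps>0$, the defining property of $X_1$ and $X_2$ yields $\mu_1(B(x,r/2))\,\mu_2(B(y,r/2))\ge r^{d_1+d_2+\eps}$ for all sufficiently small $r$, hence $\limsup_{r\to0}\frac{\log(\mu_1\ast\mu_2)(B(z,r))}{\log r}\le d_1+d_2$. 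Thus the upper local dimension of $\mu_1\ast\mu_2$ is at most $d_1+d_2$ at every point of the full-measure set $X_1+X_2$, and since the set of points at which a finite measure has upper local dimension $\le s$ has Hausdorff dimension $\le s$ (a standard Vitali-covering estimate), we conclude $\dim_\mathrm{H}(\mu_1\ast\mu_2)\le d_1+d_2<1$. Equivalently, one may phrase the same conclusion as: $\mu_1\times\mu_2$ is exact-dimensional of dimension $d_1+d_2$, and Hausdorff dimension of a measure does not increase under the Lipschitz map $\pi$.

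The only non-routine ingredient is the exact-dimensionality of $\nu_\lambda$, and this is also the step I expect to be the main obstacle to a fully self-contained argument: the estimate above uses an \emph{upper} bound on the local dimension of $\nu_\lambda$ (equivalently, the equality of its packing and Hausdorff dimensions), which does not follow from the bare value $\dim_\mathrm{H}\nu_\lambda$ appearing in the hypothesis and must be imported from the finer description of $\nu_\lambda$ provided by \cite{DGY}. We note finally that, in contrast to the absolute-continuity companion result, no parameters need to be excluded here: singularity of $\nu_{\lambda_1,\lambda_2}$ holds for every pair $(\lambda_1,\lambda_2)$ satisfying $\dim_\mathrm{H}\nu_{\lambda_1}+\dim_\mathrm{H}\nu_{\lambda_2}<1$.
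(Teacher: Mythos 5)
Your proof is correct and follows essentially the same route as the paper: the paper isolates the identical argument as Lemma~\ref{l.singularconvolution} (exact-dimensional factors imply that the convolution, being the image of the product measure under the sum map, has upper Hausdorff dimension at most $d_1+d_2<1$ and is therefore carried by a Lebesgue-null set), and combines it with \eqref{e.convolutionofdos} and the exact-dimensionality of $\nu_\lambda$ from \cite{DGY}. Your explicit ball-inclusion estimate $B(x,r/2)\times B(y,r/2)\subseteq\pi^{-1}(B(z,r))$ is exactly the step the paper leaves implicit, and your closing remark correctly identifies why exact-dimensionality (rather than the bare Hausdorff dimension) is the needed input.
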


In addition, it was shown in \cite{DGY} that for every $\lambda > 0$, we have
\begin{equation}\label{e.strictinequality}
0 < \dim_\mathrm{H} \nu_{\lambda} < \dim_\mathrm{H} \Sigma_{\lambda} < 1.
\end{equation}
This shows in particular that the curves
$$
\{ (\lambda_1,\lambda_2) \in \R_+^2 : \dim_\mathrm{H} \nu_{\lambda_1} + \dim_\mathrm{H} \nu_{\lambda_2} = 1 \}
$$
and
$$
\{ (\lambda_1,\lambda_2) \in \R_+^2 : \dim_\mathrm{H} \Sigma_{\lambda_1} + \dim_\mathrm{H} \Sigma_{\lambda_2} = 1 \}
$$
are disjoint. The complement of the union of these curves consists of three regions, in which we have three different kinds of spectral behavior due to the results above and the discussion preceding each of them. We summarize these findings and make the global picture explicit in the following corollary.

\begin{coro}
Consider the following three regions in $\R_+^2$:
\begin{align*}
U_{\mathrm{acds}} & = \{ (\lambda_1,\lambda_2) \in \R_+^2 : \dim_\mathrm{H} \nu_{\lambda_1} + \dim_\mathrm{H} \nu_{\lambda_2} > 1 \}, \\
U_{\mathrm{pmsd}} & = \{ (\lambda_1,\lambda_2) \in \R_+^2 : \dim_\mathrm{H} \Sigma_{\lambda_1} + \dim_\mathrm{H} \Sigma_{\lambda_2} > 1 \text{ and } \dim_\mathrm{H} \nu_{\lambda_1} + \dim_\mathrm{H} \nu_{\lambda_2} < 1 \}, \\
U_{\mathrm{zmsp}} & = \{ (\lambda_1,\lambda_2) \in \R_+^2 : \dim_\mathrm{H} \Sigma_{\lambda_1} + \dim_\mathrm{H} \Sigma_{\lambda_2} < 1  \}.
\end{align*}
Then, the following statements hold:
\begin{itemize}

\item[{\rm (a)}] Each of the regions $U_{\mathrm{acds}}$, $U_{\mathrm{pmsd}}$, $U_{\mathrm{zmsp}}$ is open and non-empty.

\item[{\rm (b)}] The regions $U_{\mathrm{acds}}$, $U_{\mathrm{pmsd}}$, $U_{\mathrm{zmsp}}$ are disjoint and the union of their closures covers the parameter space $\R_+^2$.

\item[{\rm (c)}] For Lebesgue almost every $(\lambda_1,\lambda_2) \in U_{\mathrm{acds}}$, $\nu_{\lambda_1, \lambda_2}$ is absolutely continuous, and hence $\Sigma_{\lambda_1, \lambda_2}$ has positive Lebesgue measure.

\item[{\rm (d)}] For every $(\lambda_1,\lambda_2) \in U_{\mathrm{pmsd}}$, $\nu_{\lambda_1, \lambda_2}$ is singular, but for Lebesgue almost every  $(\lambda_1,\lambda_2) \in U_{\mathrm{pmsd}}$, $\Sigma_{\lambda_1, \lambda_2}$ has positive Lebesgue measure.

\item[{\rm (e)}] For every $(\lambda_1,\lambda_2) \in U_{\mathrm{zmsp}}$, $\Sigma_{\lambda_1, \lambda_2}$ has zero Lebesgue measure, and hence $\nu_{\lambda_1, \lambda_2}$ is singular.

\end{itemize}
\end{coro}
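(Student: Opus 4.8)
The plan is to read the Corollary off from Theorems~\ref{t.positivemeasureSPEC} and \ref{t.singularIDS}, the absolute continuity statement for $\nu_{\lambda_1,\lambda_2}$ quoted just above it (from \cite{DGS} and \cite{DGY}), and a few known structural facts about $f(\lambda):=\dim_\mathrm{H}\Sigma_\lambda$ and $g(\lambda):=\dim_\mathrm{H}\nu_\lambda$: both are continuous on $(0,\infty)$ (\cite{DGY}), they satisfy $0<g(\lambda)<f(\lambda)<1$ by \eqref{e.strictinequality}, and one has the asymptotics $f(\lambda),g(\lambda)\to1$ as $\lambda\to0^{+}$ and $f(\lambda)\to0$ as $\lambda\to\infty$ (\cite{DG09,DGY,DEG}). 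With these in hand, (c)--(e) are pure citation, and (a)--(b) are essentially point-set topology.

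For (a), openness of each of $U_\mathrm{acds},U_\mathrm{pmsd},U_\mathrm{zmsp}$ is immediate, since they are cut out by strict inequalities imposed on the continuous functions $(\lambda_1,\lambda_2)\mapsto f(\lambda_1)+f(\lambda_2)$ and $(\lambda_1,\lambda_2)\mapsto g(\lambda_1)+g(\lambda_2)$. For non-emptiness I would work on the diagonal $\lambda_1=\lambda_2=\lambda$: small $\lambda$ gives a point of $U_\mathrm{acds}$ since $2g(\lambda)\to2$, large $\lambda$ a point of $U_\mathrm{zmsp}$ since $2f(\lambda)\to0$, and for $U_\mathrm{pmsd}$ I would argue by contradiction. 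If no $\lambda$ had $f(\lambda)>\tfrac{1}{2}>g(\lambda)$, then $f(\lambda)>\tfrac{1}{2}$ would always force $g(\lambda)\ge\tfrac{1}{2}$; putting $\lambda^{\ast}=\sup\{\mu>0: f>\tfrac{1}{2}\text{ on }(0,\mu)\}$, which lies in $(0,\infty)$ by the asymptotics, continuity yields $f(\lambda^{\ast})=\tfrac{1}{2}$ and $g(\lambda^{\ast})\ge\tfrac{1}{2}$, contradicting $g(\lambda^{\ast})<f(\lambda^{\ast})=\tfrac{1}{2}$.

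For (b), disjointness is formal: $U_\mathrm{zmsp}$ and $U_\mathrm{pmsd}$ impose opposite strict inequalities on $f(\lambda_1)+f(\lambda_2)$, $U_\mathrm{acds}$ and $U_\mathrm{pmsd}$ impose opposite strict inequalities on $g(\lambda_1)+g(\lambda_2)$, and $g<f$ forces $U_\mathrm{acds}\subset\{f(\lambda_1)+f(\lambda_2)>1\}$, which is disjoint from $U_\mathrm{zmsp}$. The complement of the union of the three regions is exactly $\Gamma_\nu\cup\Gamma_\Sigma$, where $\Gamma_\nu=\{g(\lambda_1)+g(\lambda_2)=1\}$ and $\Gamma_\Sigma=\{f(\lambda_1)+f(\lambda_2)=1\}$ are disjoint (by \eqref{e.strictinequality}) closed sets; since the union of the closures of the three regions is the closure of their union, the covering claim is equivalent to $\Gamma_\nu\cup\Gamma_\Sigma$ having empty interior. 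As $\Gamma_\nu$ and $\Gamma_\Sigma$ are disjoint and closed, any ball inside their union lies entirely in one of them, and a ball inside $\Gamma_\Sigma$ (resp.\ $\Gamma_\nu$) would make $f$ (resp.\ $g$) constant on an interval. Thus (b) comes down to the statement that $\lambda\mapsto\dim_\mathrm{H}\Sigma_\lambda$ and $\lambda\mapsto\dim_\mathrm{H}\nu_\lambda$ are nowhere locally constant; this is the one genuinely non-formal input, and I expect it to be the main obstacle. I would deduce it from the (piecewise) real-analyticity and transversality properties of these dimension functions in the weak- and strong-coupling regimes (cf.\ \cite{DG09,DGY}), a non-constant real-analytic function having no plateaus.

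For (c), $U_\mathrm{acds}$ is by definition the region $\{\dim_\mathrm{H}\nu_{\lambda_1}+\dim_\mathrm{H}\nu_{\lambda_2}>1\}$, so absolute continuity of $\nu_{\lambda_1,\lambda_2}$ for a.e.\ point of $U_\mathrm{acds}$ is exactly the result combining \cite{DGS} and \cite{DGY}; and a nontrivial absolutely continuous measure cannot be supported on a Lebesgue-null set, so $\mathrm{Leb}(\Sigma_{\lambda_1,\lambda_2})=\mathrm{Leb}(\mathrm{supp}\,\nu_{\lambda_1,\lambda_2})>0$. For (d), on $U_\mathrm{pmsd}$ one has $\dim_\mathrm{H}\nu_{\lambda_1}+\dim_\mathrm{H}\nu_{\lambda_2}<1$, hence $\nu_{\lambda_1,\lambda_2}$ is singular by Theorem~\ref{t.singularIDS}; and $U_\mathrm{pmsd}$ is open, with $\dim_\mathrm{H}\Sigma_{\lambda_1}+\dim_\mathrm{H}\Sigma_{\lambda_2}>1$ everywhere on it, so Theorem~\ref{t.positivemeasureSPEC} applied with $U=U_\mathrm{pmsd}$ gives $\mathrm{Leb}(\Sigma_{\lambda_1,\lambda_2})>0$ for a.e.\ $(\lambda_1,\lambda_2)\in U_\mathrm{pmsd}$. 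For (e), on $U_\mathrm{zmsp}$ one combines \eqref{e.sumofspectra} with the subadditivity $\dim_\mathrm{H}(\Sigma_{\lambda_1}+\Sigma_{\lambda_2})\le\dim_\mathrm{H}\Sigma_{\lambda_1}+\dim_\mathrm{H}\Sigma_{\lambda_2}<1$ for dynamically defined Cantor sets to get $\dim_\mathrm{H}\Sigma_{\lambda_1,\lambda_2}<1$, hence $\mathrm{Leb}(\Sigma_{\lambda_1,\lambda_2})=0$, and since $\nu_{\lambda_1,\lambda_2}$ is a probability measure supported on $\Sigma_{\lambda_1,\lambda_2}$ it is singular.
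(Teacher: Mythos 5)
Your proof is correct and follows the same route the paper intends: the paper gives no separate proof of this corollary, presenting it as a summary of Theorems~\ref{t.positivemeasureSPEC} and~\ref{t.singularIDS}, the absolute-continuity statement quoted from \cite{DGS} and \cite{DGY}, the zero-measure observation for the regime $\dim_\mathrm{H}\Sigma_{\lambda_1}+\dim_\mathrm{H}\Sigma_{\lambda_2}<1$, and the strict inequality \eqref{e.strictinequality}. The details you supply for (a) and (b) --- the sup/intermediate-value argument for non-emptiness of $U_{\mathrm{pmsd}}$, and the reduction of the covering claim to the dimension functions being nowhere locally constant, which does follow from the analyticity of $\lambda\mapsto\dim_\mathrm{H}\Sigma_\lambda$ and $\lambda\mapsto\dim_\mathrm{H}\nu_\lambda$ from \cite{DGY} (the former is invoked by the paper itself in the proof of Theorem~\ref{t.positivemeasureSPEC}) together with the boundary asymptotics --- are precisely the points the paper leaves implicit, and you handle them correctly.
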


\begin{remark}
(a) The potential of the Fibonacci Hamiltonian may be generated by the Fibonacci substitution $a \mapsto ab$, $b \mapsto a$. This substitution is the most prominent example of an invertible two-letter substitution. We believe that, using \cite{G14, M}, the results above may be generalized to the case where the Fibonacci substitution is replaced by a general invertible two-letter substitution. \\
(b) We expect that similar phenomena can appear also in other models, such as for example the labyrinth model, or the square off-diagonal (or tridiagonal) Fibonacci Hamiltonian.
\end{remark}

The coexistence of positive measure spectrum and singular density of states measure is a rather unusual phenomenon. Until very recently it was an open problem whether this can even occur in the context of Schr\"odinger operators. The existence of Schr\"odinger operators with quasi-periodic potentials exhibiting this phenomenon was shown in \cite{ADZ}. However, the examples given in that paper are somewhat artificial, and ``typical'' quasi-periodic Schr\"odinger operators are not expected to have these two properties. The examples provided by the Square Fibonacci Hamiltonian with parameters in $U_{\mathrm{pmsd}}$, on the other hand, are not artificial at all, but rather correspond to operators that are arguably physically relevant. Moreover the phenomenon is made possible by and is closely connected to the strict inequality between $\dim_\mathrm{H} \nu_{\lambda}$ and $\dim_\mathrm{H} \Sigma_{\lambda}$, as stated in \eqref{e.strictinequality}, which was originally conjectured by Barry Simon and finally proved in \cite{DGY} (see \cite{DG12} for an earlier partial result for sufficiently small values of $\lambda$).

\section{Sums of Dynamically Defined Cantor Sets}\label{s.sumproof}

In this section we prove Theorem~\ref{t.sumhyp}. The proof is based on Theorem~3.7 from \cite{DGS}. The setting there is the following.

Suppose $J \subset \R$ is a compact interval, and  $f_{\lambda} : M^2 \to M^2$, $\lambda \in J$, is a smooth family of smooth surface diffeomorphisms. Specifically, we require $f_\lambda(p)$ to be $C^2$-smooth with respect to both $\lambda$ and $p$, with a finite $C^2$-norm. Also, we assume that $f_{\lambda} : M^2 \to M^2$, $\lambda \in J$, has a locally maximal transitive totally disconnected hyperbolic set $\Lambda_\lambda$ that depends continuously on the parameter.

Let $\gamma_\lambda : \R \to M^2$ be a family of smooth curves, smoothly depending on the parameter, and $L_\lambda = \gamma_\lambda(\R)$. Suppose that the stable manifolds of $\Lambda_\lambda$ are transversal to $L_\lambda$.

\begin{lemma}[Lemma 3.1 from \cite{DGS}]\label{l.pi}
There is a Markov partition of $\Lambda_\lambda$ and a continuous family of projections $\pi_\lambda:\Lambda_\lambda\to L_\lambda$ along stable manifolds of $\Lambda_\lambda$ such that for any two distinct elements of the Markov partition, their images under $\pi_\lambda$ are disjoint.
\end{lemma}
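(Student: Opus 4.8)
The plan is to realize $\pi_\lambda$ as the stable holonomy onto $L_\lambda$ and then to choose the Markov partition so that this holonomy identifies no two distinct partition elements. First I would invoke the Bowen--Sinai theorem: the locally maximal hyperbolic set $\Lambda_\lambda$ of the $C^2$ surface diffeomorphism $f_\lambda$ admits Markov partitions of arbitrarily small diameter, with elements that are ``rectangles'' for the local product structure. By structural stability of $\Lambda_\lambda$ together with continuous dependence of $\Lambda_\lambda$, of its local stable and unstable manifolds, and of the continuations of the partition elements on $\lambda$, the combinatorial type of such a partition is locally constant in $\lambda$; since the assertion is local in $\lambda$ and can be patched over a locally finite cover of $J$ (passing to common refinements on overlaps), I may fix a small parameter subinterval and work throughout with one combinatorial Markov partition $\mathcal R(\lambda)=\{R_1(\lambda),\dots,R_m(\lambda)\}$ depending continuously on $\lambda$.

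Next I would define the projection. Since $L_\lambda$ is transversal to every leaf of $W^s(\Lambda_\lambda)$ with compact intersection, while $\Lambda_\lambda$ is compact and — using compactness of $J$ and continuous dependence — the transversality angle is bounded below uniformly in $\lambda$, there is a uniform $\varepsilon>0$ such that for each $x\in\Lambda_\lambda$ the local stable manifold $W^s_\varepsilon(x)$ meets $L_\lambda$ in exactly one point; set $\pi_\lambda(x)$ equal to that point. Joint continuity of $\pi_\lambda$ in $(x,\lambda)$ is then immediate from the $C^1$-continuous dependence of $W^s_\varepsilon(x)$ on the base point and on $\lambda$. I would also shrink the partition of the previous paragraph so its elements have diameter $\ll\varepsilon$, and shrink $\varepsilon$ so that even $W^s_{2\varepsilon}(z)$ meets $L_\lambda$ at most once for every $z$; this guarantees that every coincidence $\pi_\lambda(x)=\pi_\lambda(y)$ with $x,y\in\Lambda_\lambda$ is realized inside a single $W^s_\varepsilon$-slice and, since $f_\lambda$ contracts stable manifolds, is preserved by $f_\lambda$.

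The heart of the proof is the disjointness of the images. Because $\pi_\lambda$ collapses each local stable slice $W^s_\varepsilon(x)\cap\Lambda_\lambda$ to a point, one has $\pi_\lambda(R_i)\cap\pi_\lambda(R_j)\ne\emptyset$ exactly when some stable leaf meets both $R_i$ and $R_j$; what I want is therefore a Markov partition \emph{subordinate to the stable lamination}, i.e.\ one in which each local stable slice lies in a single element. To build one, note that $f_\lambda$ carries stable leaves to stable leaves, so by the size conditions above $\pi_\lambda$ semiconjugates $f_\lambda|_{\Lambda_\lambda}$ to a map $g_\lambda$ of the totally disconnected compact set $C_\lambda^\ast:=\pi_\lambda(\Lambda_\lambda)\subset L_\lambda$ (the unstable Cantor set of $\Lambda_\lambda$ read on $L_\lambda$), and $g_\lambda$ is expanding after passing to an iterate if necessary; the images $\pi_\lambda(\Lambda_\lambda\cap R_i)$ endow $(C_\lambda^\ast,g_\lambda)$ with an initial Markov structure. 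I would then pick a Markov partition $\{U_1,\dots,U_N\}$ for $(C_\lambda^\ast,g_\lambda)$ with pieces of diameter so small that it refines $\{\pi_\lambda(\Lambda_\lambda\cap R_i)\}$ and, using total disconnectedness of $C_\lambda^\ast$, with pairwise disjoint convex hulls, and set $\widetilde R_k:=\Lambda_\lambda\cap\pi_\lambda^{-1}(U_k)$. Each $\widetilde R_k$ is a union of full local stable slices and is a rectangle for the product structure (since $\pi_\lambda([x,y])=\pi_\lambda(x)$ locally), and $\{\widetilde R_k\}$ is a genuine Markov partition of $\Lambda_\lambda$ because the Markov relations transfer from $\{U_k\}$ through the semiconjugacy $\pi_\lambda$; by construction no stable leaf meets two distinct $\widetilde R_k$, so $\pi_\lambda(\widetilde R_k)\subseteq U_k$ are pairwise disjoint. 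The combinatorics of $\{\widetilde R_k\}$ is again locally constant in $\lambda$, which permits the patching over $J$.

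The hard part will be exactly this third step: an arbitrary Markov partition is \emph{not} subordinate to the stable lamination — elements lying ``side by side'' in the stable direction share leaves — and refining in the stable direction or in the unstable direction alone does not fix this. The point that makes it work is that $\pi_\lambda$ descends to an expanding map on the totally disconnected set $C_\lambda^\ast$, combined with the standard fact that such a one-dimensional expanding Cantor system admits Markov partitions with pairwise disjoint convex hulls of arbitrarily small diameter, whose $\pi_\lambda$-preimages are automatically stable-saturated. One must also be careful to coordinate all the size choices (so that coincidences of $\pi_\lambda$ and their $f_\lambda$-images remain inside the chosen local stable manifolds) and to keep every combinatorial choice continuous in $\lambda$ so that the final family $\{\widetilde R_k(\lambda)\}$ and the projections $\pi_\lambda$ glue to the asserted continuous family over all of $J$.
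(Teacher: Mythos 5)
The paper does not actually prove this statement---it is imported verbatim as Lemma~3.1 of \cite{DGS}---so your argument can only be judged on its own terms, and on those terms there is a genuine gap, located exactly where you define the projection. You set $\pi_\lambda(x)$ to be the unique point of $W^s_\varepsilon(x)\cap L_\lambda$ for a uniform $\varepsilon>0$. But $W^s_\varepsilon(x)$ is a curve of length comparable to $\varepsilon$ through $x$, and there is no reason for $\Lambda_\lambda$ to lie in an $\varepsilon$-neighborhood of $L_\lambda$: for every $x$ with $\mathrm{dist}(x,L_\lambda)>\varepsilon$ the set $W^s_\varepsilon(x)\cap L_\lambda$ is empty and $\pi_\lambda(x)$ is undefined. (In the application, $L_\lambda$ is the line of initial conditions and the trace-map horseshoe is nowhere near it.) To reach $L_\lambda$ one must follow the \emph{global} leaf $W^s(x)$, which in general meets $L_\lambda$ in many points; the real content of the lemma is the continuous choice of one such intersection point per $x$, a choice that can only be made locally, rectangle by rectangle---which is precisely what the Markov partition is for. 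Your proof never confronts this.

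The repair you propose for disjointness---quotienting by stable leaves to get an expanding map $g_\lambda$ on $C_\lambda^\ast=\pi_\lambda(\Lambda_\lambda)$ and pulling back a Markov partition with disjoint convex hulls---rests on the same faulty foundation and is self-defeating if taken literally. If $\pi_\lambda$ were genuinely constant on global stable leaves, disjointness of the images would be \emph{impossible}, not automatic: for any two elements $R_i,R_j$ of any Markov partition of a transitive horseshoe there are points $x\in R_i$ and $y\in R_j$ on a common global stable leaf (splice an admissible word from the symbol $j$ into the future itinerary of $x$), so a leaf-constant projection gives $\pi_\lambda(R_i)\cap\pi_\lambda(R_j)\neq\emptyset$ for all $i,j$. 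The projection in the lemma is constant only on \emph{local} stable slices inside each rectangle's product neighborhood, and rectangles sharing a global leaf are sent to \emph{different} points of that leaf's intersection with $L_\lambda$; disjointness is then arranged by taking the rectangles small and the holonomy base points distinct. Relatedly, the expanding map on the unstable Cantor set is itself only defined through a Markov partition (or an equivalent system of local holonomies), so using $(C_\lambda^\ast,g_\lambda)$ to construct the partition is circular. The ingredients you do use correctly---Bowen--Sinai, structural stability and H\"older continuation for the $\lambda$-dependence, uniform transversality from compactness---are all needed, but the heart of the lemma is the local holonomy choice, and that is missing.
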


Suppose $\sigma_A : \Sigma^\ell_A \to \Sigma^\ell_A$ is a topological Markov chain, which for every $\lambda \in J$ is conjugated to $f_{\lambda} : \Lambda_{\lambda} \to \Lambda_{\lambda}$ via the conjugacy $H_{\lambda} : \Sigma^\ell_A \to \Lambda_{\lambda}$. Let $\mu$ be an ergodic probability measure for $\sigma_A : \Sigma_A^\ell \to \Sigma_A^\ell$ such that $h_{\mu}(\sigma_A)>0$. Set $\mu_\lambda = H_\lambda (\mu)$, then $\mu_\lambda$ is an ergodic invariant measure for $f_\lambda : \Lambda_\lambda \to \Lambda_\lambda$.

Let $\pi_\lambda : \Lambda_\lambda \to L_\lambda$ be the continuous family of continuous projections along the stable manifolds of $\Lambda_\lambda$ provided by Lemma \ref{l.pi}. Set $\nu_\lambda = \gamma^{-1}_\lambda \circ \pi_\lambda (\mu_\lambda) = \gamma^{-1}_\lambda \circ \pi_\lambda \circ H_\lambda (\mu)$.

In this setting the following theorem holds.

\begin{theorem}[Theorem 3.7 from \cite{DGS}]\label{t.ac2}
Suppose that $J$ is a compact interval so that $\left| \frac{d}{d\lambda} \mathrm{Lyap}^{u}(\mu_\lambda) \right| \ge \delta > 0$ for some $\delta > 0$ and all $\lambda \in J$. Then for any compactly supported exact-dimensional measure $\eta$ on $\R$ with $$\dim_\mathrm{H} \eta + \dim_\mathrm{H} \nu_\lambda > 1$$  for all $\lambda \in J$, the convolution $\eta * \nu_\lambda$ is absolutely continuous with respect to Lebesgue measure for Lebesgue almost every $\lambda \in J$.
\end{theorem}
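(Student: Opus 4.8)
\medskip
\noindent\textbf{Proof proposal.} The plan is to prove Theorem~\ref{t.ac2} through the classical lower-density criterion: a finite Borel measure $\rho$ on $\R$ is absolutely continuous as soon as $\und D(\rho,x) := \lir\rho([x-r,x+r])$ is finite for $\rho$-a.e.\ $x$ (any nontrivial singular part $\rho_s$ is carried by a Lebesgue-null set, on which a Besicovitch covering argument forces $\und D(\rho_s,\cdot) = \infty$ $\rho_s$-a.e.). Shrinking $J$ first — a general $J$ is covered by finitely many short subintervals and ``a.e.\ $\lambda$'' patches together — one may assume $J$ so short that $\dim_\mathrm{H}\nu_\lambda$ and $\mathrm{Lyap}^u(\mu_\lambda)$ are essentially constant on $J$; set $d_1 := \inf_{\lambda\in\overline J}\dim_\mathrm{H}\nu_\lambda$, $d_2 := \dim_\mathrm{H}\eta$, so that $d_1+d_2>1$. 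It then suffices to show
\[
\int_J\int_\R\und D(\eta*\nu_\lambda, x)\,d(\eta*\nu_\lambda)(x)\,d\lambda < \infty .
\]
Writing $x_\lambda := \gamma_\lambda^{-1}\circ\pi_\lambda\circ H_\lambda : \Sigma^\ell_A\to\R$, so that $\nu_\lambda = (x_\lambda)_*\mu$ and $\eta*\nu_\lambda$ is the push-forward of $\eta\times\mu$ under $(a,\omega)\mapsto a+x_\lambda(\omega)$, Fatou and Tonelli bound the left-hand side by
\[
\liminf_{r\downarrow0}\frac1{2r}\int_J\iiiint\mathbf 1\bigl[\,|a - a' + x_\lambda(\omega) - x_\lambda(\omega')|\le r\,\bigr]\,d\eta(a)\,d\eta(a')\,d\mu(\omega)\,d\mu(\omega')\,d\lambda .
\]
Since $h_\mu(\sigma_A) > 0$, $\mu$ is non-atomic, the diagonal $\{\omega=\omega'\}$ is $\mu\times\mu$-null, and for a.e.\ pair the longest common cylinder $C(\omega,\omega') = [\omega_0\dots\omega_{n-1}]$ has a finite generation $n = n(\omega,\omega')$, along which I would organize the remaining estimate.

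\medskip
\noindent The essential ingredient is a transversality estimate. As $\pi_\lambda$ collapses stable manifolds, $x_\lambda(\omega)$ depends only on the forward coordinates of $\omega$; the induced one-sided system is $C^{1+}$-expanding with distortion bounds uniform in $\lambda$, and, writing $S_n$ for Birkhoff sums and $a_\lambda>0$ for its expansion cocycle (so $\int a_\lambda\,d\mu = \mathrm{Lyap}^u(\mu_\lambda)$), one has $\ell_{\omega,\omega'} := \inf_{\lambda\in J}|x_\lambda(\omega) - x_\lambda(\omega')|\asymp\exp(-\sup_{\lambda\in J}S_n a_\lambda(\omega))$. The hypothesis $|\tfrac{d}{d\lambda}\mathrm{Lyap}^u(\mu_\lambda)|\ge\delta>0$ forces the cocycle $a_\lambda$, hence every $x_\lambda(\omega)$, to deform genuinely with $\lambda$; the goal is to promote this into
\[
\mathrm{Leb}\bigl(\{\lambda\in J :\ |t + x_\lambda(\omega) - x_\lambda(\omega')|\le r\}\bigr)\ \le\ C_0\,\frac{r}{\ell_{\omega,\omega'}}\qquad\text{for all }t\in\R,\ r>0,
\]
with $C_0$ independent of the pair $\omega\ne\omega'$ (a weaker exponent $(r/\ell_{\omega,\omega'})^\gamma$, $\gamma<1$ close to $1$, would also do, at the cost of requiring $\dim_\mathrm{H}\eta + \dim_\mathrm{H}\nu_\lambda$ to exceed $1$ by a fixed amount). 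One obtains this by writing $x_\lambda(\omega) - x_\lambda(\omega')$ through the inverse branch of $\Phi_\lambda^n$ on $C(\omega,\omega')$ and differentiating in $\lambda$: the first $\lambda$-derivative is bounded below by a multiple of $\ell_{\omega,\omega'}\cdot|\tfrac{d}{d\lambda}S_n a_\lambda(\omega)|$ and the second above (uniformly, using the $C^2$-bounds), so a one-variable transversality lemma supplies the sublevel bound.

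\medskip
\noindent Granting transversality, I would bound the $\liminf$. After the standard reductions — decomposing the exact-dimensional $\eta$ into countably many mutually singular pieces each obeying a genuine Frostman bound $\eta_k(B(x,s))\le C_k s^{d_2-\eps}$ for all $s$, and restricting $\mu$ to subsets of measure close to $1$ on which Shannon--McMillan--Breiman ($\mu(C)\approx e^{-nh_\mu}$ for generation-$n$ cylinders, with $\approx e^{nh_\mu}$ such cylinders) and the Birkhoff theorem for $a_\lambda$ hold with uniform rates (so that $\ell_{\omega,\omega'}\approx e^{-n\Lambda}$, $\Lambda\approx\mathrm{Lyap}^u(\mu_\lambda)$, $d_1=h_\mu/\Lambda$); an increasing union of absolutely continuous measures of bounded mass being absolutely continuous, it is enough to treat each such piece. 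Split the pairs $(\omega,\omega')$ by whether $\ell_{\omega,\omega'}< r$ or $\ge r$. In the first case $n\gtrsim\tfrac1\Lambda\log\tfrac1r$, so these pairs carry $\mu\times\mu$-mass $\lesssim r^{d_1}$, and since $J$ is short the constraint forces $|a-a'|\lesssim r$, giving a contribution $\lesssim\tfrac1r\cdot r^{d_2-\eps}\cdot r^{d_1} = r^{d_1+d_2-1-\eps}\to0$. In the second case, integrate in $\lambda$ by transversality (factor $r/\ell_{\omega,\omega'}$), then in $a,a'$ by the Frostman bound (the constraint forces $|a-a'|\lesssim\ell_{\omega,\omega'}$, of $\eta\times\eta$-mass $\lesssim\ell_{\omega,\omega'}^{d_2-\eps}$), then over $\omega,\omega'$ inside a fixed generation-$n$ cylinder (factor $\mu(C)^2$) and over the $\approx e^{nh_\mu}$ cylinders, obtaining per generation $\lesssim e^{nh_\mu}e^{-2nh_\mu}e^{-n\Lambda(d_2-1-\eps)} = e^{-n(h_\mu - \Lambda(1-d_2+\eps))}$ — a geometric series whose ratio is $<1$ for small $\eps$ precisely because $h_\mu/\Lambda + d_2 = d_1+d_2>1$. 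Summing the two cases bounds the $\liminf$, hence the integral we began with, and Theorem~\ref{t.ac2} follows.

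\medskip
\noindent I expect the transversality estimate of the second paragraph to be the main obstacle. It must hold \emph{uniformly over all pairs $\omega\ne\omega'$ and all scales}, so a single differentiation does not suffice: one also has to control the second $\lambda$-derivative of $x_\lambda(\omega) - x_\lambda(\omega')$ — which is exactly why the family is required to be $C^2$ and not merely $C^{1+\alpha}$ — and, more delicately, one must pass from the \emph{averaged} non-degeneracy $|\tfrac{d}{d\lambda}\mathrm{Lyap}^u(\mu_\lambda)|\ge\delta$ to pointwise lower bounds on $\tfrac{d}{d\lambda}S_n a_\lambda(\omega)$ along enough orbits, which is subtle because distinct symbolic sequences can have very different orbit statistics. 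In \cite{DGS} this is carried out by adapting the Peres--Solomyak transversality method to the dynamically defined setting, which is why the result is quoted here rather than reproved.
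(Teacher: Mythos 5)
First, a point of order: this paper does not prove Theorem~\ref{t.ac2}. It is imported verbatim as Theorem~3.7 of \cite{DGS} and used as a black box in the proof of Theorem~\ref{t.sumhyp}, so there is no internal argument to measure your attempt against; the only meaningful comparison is with the proof in \cite{DGS} itself. Against that benchmark your outline is the right one: \cite{DGS} does run the Peres--Solomyak scheme --- absolute continuity via finiteness of the lower density of $\eta*\nu_\lambda$ integrated over $x$ and $\lambda$, Fatou and Tonelli to reduce to a sublevel estimate for $|a-a'+x_\lambda(\omega)-x_\lambda(\omega')|$, a transversality bound in $\lambda$ sourced from the non-degeneracy of $\frac{d}{d\lambda}\mathrm{Lyap}^u(\mu_\lambda)$, and a summation over cylinders governed by Shannon--McMillan--Breiman together with $\dim_\mathrm{H}\nu_\lambda=h_\mu/\mathrm{Lyap}^u(\mu_\lambda)$. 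Your closing remarks correctly locate the crux.

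That crux, however, is a genuine gap in the proposal as written, and not only because you leave it as a ``goal''. The sublevel bound you ask for --- $\mathrm{Leb}\{\lambda\in J: |t+x_\lambda(\omega)-x_\lambda(\omega')|\le r\}\le C_0\, r/\ell_{\omega,\omega'}$ with $C_0$ independent of the pair --- is false in general for \emph{all} pairs $\omega\ne\omega'$ (and so is the weakened version with exponent $\gamma<1$). The hypothesis controls only the $\mu$-average of $\frac{\partial}{\partial\lambda}\log|Df_\lambda|_{E^u}|$; individual orbits, e.g.\ periodic ones, can have Birkhoff sums $\frac{d}{d\lambda}S_n a_\lambda(\omega)$ that vanish or change sign on all of $J$, and for such pairs the $\lambda$-derivative of $x_\lambda(\omega)-x_\lambda(\omega')$ can degenerate, so no pair-independent constant exists. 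The estimate must instead be restricted to a set of pairs of nearly full $\mu\times\mu$-measure on which the Birkhoff averages of the \emph{$\lambda$-derivative} of the cocycle --- not merely of $a_\lambda$ itself, which is all your ``standard reductions'' paragraph invokes --- are uniformly close to $\frac{d}{d\lambda}\mathrm{Lyap}^u(\mu_\lambda)$, with the exceptional set absorbed by an Egorov or large-deviation argument; one must also check that the positional term produced when differentiating the inverse branch (the $\lambda$-derivative of the bounded factor multiplying $e^{-S_na_\lambda}$) does not swamp the main term $\ell_{\omega,\omega'}\cdot\frac{d}{d\lambda}S_na_\lambda(\omega)$, which is where the $C^2$ bounds and bounded distortion in the $\lambda$-direction actually enter. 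This is precisely the technical core of \cite{DGS}; without it your argument does not close, and with it the counting in your final paragraph is the standard and correct conclusion.
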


\begin{remark}
In fact, in Theorem \ref{t.ac2} the condition on exact dimensionality of the measure $\eta$ can be replaced by the following one (and this is the only consequence of exact dimensionality of $\eta$ that was used in the proof of Theorem \ref{t.ac2} in \cite{DGS}):
\begin{itemize}
\item There are $C > 0$ and $d > 0$ such that for every $x \in \mathbb{R}$ and $r > 0$, we have $\eta(B_r(x)) \leq C r^{d}$.
\end{itemize}
\end{remark}

\begin{proof}[Proof of Theorem~\ref{t.sumhyp}.]
The condition $\dim_\mathrm{H} C_{\lambda} + \dim_\mathrm{H} K > 1$ trivially implies that $\dim_\mathrm{H} K > 0$. By Frostman's Lemma (see, e.g., \cite[Theorem~8.8]{Mattila}), for every $d < \dim_\mathrm{H} K$, there exist a Borel measure $\eta$ on $\R$ with $\eta(K) = 1$ and a constant $C$ such that
\begin{equation}\label{FrostmanMainC}
\eta(B_r(x)) \leq C r^{d} \quad \text{for every } x \in \R \text{ and } r > 0.
\end{equation}

We will show that for every $\lambda_0 \in J$, there exists $\varepsilon = \varepsilon(\lambda_0) > 0$ such that $\mathrm{Leb} (C_\lambda + K) > 0$ for Lebesgue almost every $\lambda \in (\lambda_0 - \varepsilon, \lambda_0 + \varepsilon) \cap J$. This will imply Theorem~\ref{t.sumhyp}.

Fix $\lambda_0 \in J$. Let $\mu_{\lambda_0}$ be the equilibrium measure on $\Lambda_{\lambda_0}$ that corresponds to the potential $-\dim_\mathrm{H} C_{\lambda_0} \log |Df_{\lambda_0}|_{E^u}|$. Then (see \cite{MM}), the measure $\mu_{\lambda_0}$ is a measure of maximal (unstable) dimension, that is, $\dim_\mathrm{H} \pi_{\lambda_0} (\mu_{\lambda_0}) = \dim_\mathrm{H} C_{\lambda_0}$. Denote by $\nu_{\lambda_0}$ the projection $\pi_{\lambda_0}(\mu_{\lambda_0})$. In order to mimic the setting of Theorem~\ref{t.ac2}, set $\mu = H^{-1}_{\lambda_0}(\mu_{\lambda_0})$. Then $\mu$ is an invariant probability measure for the shift $\sigma_A : \Sigma_A^l \to \Sigma_A^l$. Let us denote $\mu_\lambda = H_\lambda(\mu)$ and $\nu_\lambda = \pi_\lambda(\mu_\lambda)$. There exists a canonical family of conjugacies $\mathcal{H}_{\lambda_1, \lambda_2} : \Lambda_{\lambda_1} \to \Lambda_{\lambda_2}$, $(\lambda_1, \lambda_2) \in J \times J$, so that $\mathcal{H}_{\lambda_1, \lambda_2} \circ f_{\lambda_1} = f_{\lambda_2} \circ \mathcal{H}_{\lambda_1, \lambda_2}$. It is well known (see, for example, Theorem 19.1.2 from \cite{KaH}) that each of the maps $\mathcal{H}_{\lambda_1, \lambda_2}$ is H\"older continuous. Moreover, the H\"older exponent tends to one as $|\lambda_1 - \lambda_2| \to 0$; see \cite{PV}.  As a result, we conclude that for any $\lambda$ sufficiently close to $\lambda_0$, we have
$$
\dim_\mathrm{H} \nu_\lambda + d > 1
$$
for a suitable $d$ that is chosen sufficiently close to $\dim_\mathrm{H} K$ and for which we have \eqref{FrostmanMainC} with suitable $\eta$ and $C$.

In order to apply Theorem~\ref{t.ac2} we need to show that  $\left| \frac{d}{d\lambda} \mathrm{Lyap}^u(\mu_\lambda) \right| \ge \delta > 0$. But due to \cite{Man} we know that
$$
\mathrm{Lyap}^u \mu_\lambda = \frac{h_{\mu_\lambda}}{\dim_\mathrm{H} \nu_{\lambda}},
$$
where $h_{\mu_{\lambda_0}} = h_{\mu}$ is the entropy of the invariant measure $\mu_\lambda$ (which is by construction independent of $\lambda$). Notice also that $\mathrm{Lyap}^u \mu_\lambda$ is a $C^1$ smooth function of $\lambda$. Indeed, the center-stable and center-unstable manifolds of the partially hyperbolic invariant set of the map $(\lambda, p) \mapsto (\lambda, f_\lambda(p))$ are $C^2$-smooth, hence
$$
\mathrm{Lyap}^u \mu_\lambda = \int_{\Lambda_\lambda} \log |Df_\lambda|_{E^u}| \, d\mu_\lambda = \int_{\Sigma_A^l} \log|Df_\lambda(H_\lambda(\omega))|_{E^u}| \, d\mu(\omega)
$$
is a $C^1$-smooth function of $\lambda \in J$. 

Finally, consider $\dim_\mathrm{H} C_\lambda$ and $\dim_\mathrm{H} \nu_\lambda$ as functions of $\lambda$; see Fig. \ref{fig.2}. Due to \cite{Ma} we know that $\dim_\mathrm{H} C_\lambda$ is a $C^1$-function of $\lambda$. Without loss of generality we can assume that $\frac{d}{d\lambda}\dim_\mathrm{H} C_\lambda \ge \delta > 0$ for some $\delta > 0$. Since $\mathrm{supp} \, \nu_\lambda \subseteq C_\lambda$, we have $\dim_\mathrm{H} \nu_\lambda \le \dim_\mathrm{H} C_\lambda$. By construction we have $\dim_\mathrm{H} \nu_{\lambda_0} = \dim_\mathrm{H} C_{\lambda_0}$. This implies that $\frac{d}{d\lambda}|_{\lambda = \lambda_0} \dim_\mathrm{H} \nu_{\lambda} = \frac{d}{d\lambda}|_{\lambda = \lambda_0} \dim_\mathrm{H} C_{\lambda} \ge \delta > 0$, and hence for some $\varepsilon > 0$, $\frac{d}{d\lambda} \dim_\mathrm{H} \nu_{\lambda} \ge \frac{\delta}{2} > 0$ for $\lambda \in (\lambda_0 - \varepsilon, \lambda_0 + \varepsilon)$. Now we can apply Theorem~\ref{t.ac2} to the measures $\eta$ and $\nu_\lambda$, and get that for Lebesgue almost every $\lambda \in (\lambda_0 - \varepsilon, \lambda_0 + \varepsilon)$, the convolution $\eta * \nu_\lambda$ is absolutely continuous with respect to Lebesgue measure, and hence $\mathrm{Leb}(C_\lambda + K) > 0$.
\end{proof}
\begin{figure}\label{fig.2}
\begin{center}
   \includegraphics[scale=.7]{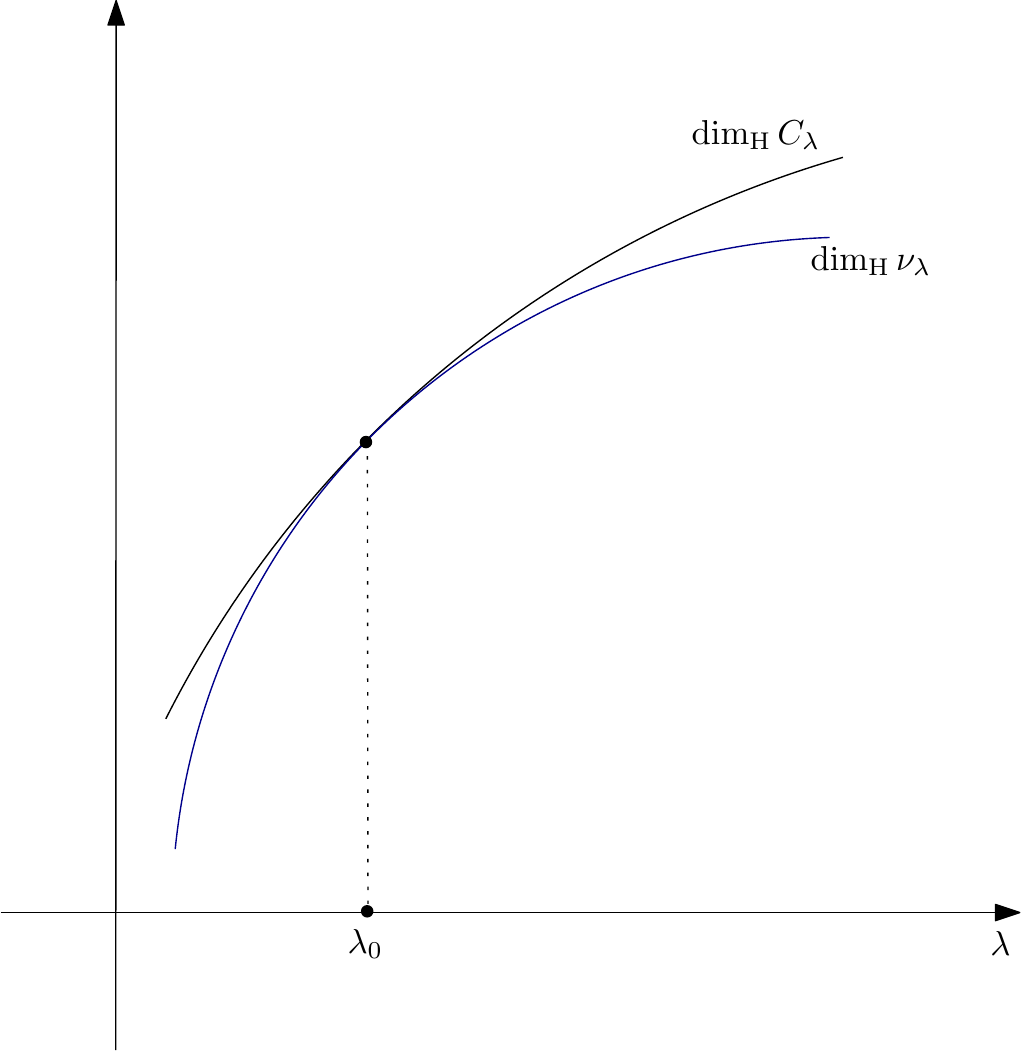}\caption{Graphs of $\dim_\mathrm{H} C_\lambda$ and $\dim_{H} \nu_\lambda$ as functions of $\lambda$}
   \end{center}
\end{figure}

\section{The Square Fibonacci Hamiltonian}

\subsection{A Dynamical Description of the Spectrum of the Fibonacci Hamiltonian}

There is a fundamental connection between the spectral properties of the Fibonacci Hamiltonian and the dynamics of the \textit{trace map}
\begin{equation}\label{e.tracemapdef}
T : \Bbb{R}^3 \to \Bbb{R}^3, \; T(x,y,z)=(2xy-z,x,y).
\end{equation}
The function $G(x,y,z) = x^2+y^2+z^2-2xyz-1$ is invariant\footnote{$G$ is usually called the Fricke-Vogt invariant.} under the action of $T$, and hence $T$ preserves the family of cubic surfaces\footnote{The surface $S_0$ is known as Cayley cubic.}
\begin{equation}\label{e.surfacelambdadef}
S_\lambda = \left\{(x,y,z)\in \Bbb{R}^3 : x^2+y^2+z^2-2xyz=1+ \frac{\lambda^2}{4} \right\}.
\end{equation}
It is therefore natural to consider the restriction $T_{\lambda}$ of the trace map $T$ to the invariant surface $S_\lambda$. That is, $T_{\lambda}:S_\lambda \to S_\lambda$, $T_{\lambda}=T|_{S_\lambda}$. We denote by $\Lambda_{\lambda}$ the set of points in $S_\lambda$ whose full orbits under $T_{\lambda}$ are bounded (it follows from \cite{Can, R} that $\Lambda_\lambda$ is equal to the non-wandering set of $T_\lambda$; compare the discussion in \cite{DG12}).

Denote by $\ell_\lambda$ the line
\begin{equation}\label{e.loic}
\ell_\lambda = \left\{ \left(\frac{E-\lambda}{2}, \frac{E}{2}, 1 \right) : E \in \R \right\}.
\end{equation}
It is easy to check that $\ell_\lambda \subset S_\lambda$. The key to the fundamental connection between the spectral properties of the Fibonacci Hamiltonian and the dynamics of the trace map is the following result of S\"ut\H{o} \cite{S87}. An energy $E \in \R$ belongs to the spectrum $\Sigma_\lambda$ of the Fibonacci Hamiltonian if and only if the positive semiorbit of the point $(\frac{E-\lambda}{2}, \frac{E}{2}, 1)$ under iterates of the trace map $T$ is bounded.

It turns out that for every $\lambda > 0$, $\Lambda_\lambda$ is a locally maximal compact transitive hyperbolic set of $T_{\lambda} : S_\lambda \to S_\lambda$; see \cite{Can, Cas, DG09}. Moreover, it was shown in \cite{DGY} that for every $\lambda > 0$, the line of initial conditions $\ell_\lambda$ intersects $W^s(\Lambda_\lambda)$ transversally. Thus, we are essentially in the setting in which Theorem~\ref{t.sumhyp} applies. The only minor difference is that in the present setting, the surface $S_\lambda$ depends formally on $\lambda$, while it is $\lambda$-independent in the setting of Theorem~\ref{t.sumhyp}. After partitioning the parameter space into smaller intervals if necessary, we can then consider a small $\lambda$-interval, choose a $\lambda_0$ in it, and then conjugate with smooth projections of $S_\lambda$ to $S_{\lambda_0}$.

\subsection{Proof of Theorem~\ref{t.positivemeasureSPEC}}

Using the connection between the spectrum of the (one-dimensional) Fibonacci Hamiltonian and the dynamics of the trace map, we can now derive Theorem~\ref{t.positivemeasureSPEC} from Theorem~\ref{t.sumhyp}.

\begin{proof}[Proof of Theorem~\ref{t.positivemeasureSPEC}.]
It clearly suffices to work locally in $U$. That is, we consider a rectangular box $B = \{ (\lambda_1,\lambda_2) : a < \lambda_1 < b, \, c < \lambda_2 < d \}$ inside $U$ and prove that for Lebesgue almost every $(\lambda_1,\lambda_2) \in B$, $\Sigma_{\lambda_1,\lambda_2}$ has positive Lebesgue measure. To accomplish this, it suffices to show that for every fixed $\lambda_2 \in (c,d)$, $\Sigma_{\lambda_1,\lambda_2}$ has positive Lebesgue measure for Lebesgue almost every $\lambda_1 \in (a,b)$.

The set $\Sigma_{\lambda_2}$ will play the role of the set $K$ in Theorem~\ref{t.sumhyp}. By the analyticity of $\lambda_1 \mapsto \dim_\mathrm{H} \Sigma_{\lambda_1}$, we can subdivide $(a,b)$ into intervals, on the interiors of which we have the condition
$$
\frac{d}{d\lambda_1} \dim_\mathrm{H} \Sigma_{\lambda_1} \ne 0.
$$
This ensures that condition \eqref{e.cond1} in Theorem~\ref{t.sumhyp} holds. Condition \eqref{e.cond2} in Theorem~\ref{t.sumhyp} holds since we work inside $U$. All the other assumptions in Theorem~\ref{t.sumhyp} hold by the discussion in the previous subsection. Thus we may apply Theorem~\ref{t.sumhyp} and obtain the desired statement.
\end{proof}

\subsection{Proof of Theorem~\ref{t.singularIDS}}

Let us begin by recalling some basic concepts from measure theory and fractal geometry; the standard texts \cite{Falc,Mattila} can be consulted for background information. Suppose $\mu$ is a finite Borel measure on $\R^d$. The lower Hausdorff dimension, resp.\ the upper Hausdorff dimension, of $\mu$ are given by
\begin{align}
\label{e.lowerdimmeas}  \dim_\mathrm{H}^-(\mu) & = \inf \{ \dim_\mathrm{H}(S) : \mu(S) > 0 \}, \\
\label{e.upperdimmeas} \dim_\mathrm{H}^+(\mu) & = \inf \{ \dim_\mathrm{H}(S) : \mu(\R^d \setminus S) = 0 \}.
\end{align}
These dimensions can be interpreted in the following way. The measure $\mu$ gives zero weight to every set $S$ with $\dim_\mathrm{H}(S) <
\dim_\mathrm{H}^-(\mu)$ and, for every $\varepsilon > 0$, there is a set $S$ with $\dim_\mathrm{H}(S) < \dim_\mathrm{H}^+(\mu) + \varepsilon$ that supports
$\mu$ (i.e., $\mu(\R \setminus S) = 0$).

For $x \in \R^d$ and $\varepsilon > 0$, we denote the open ball with radius $\varepsilon$ and center $x$ by $B(x,\varepsilon)$. The lower scaling exponent of $\mu$ at $x$ is given by
$$
\alpha_\mu^-(x) = \liminf_{\varepsilon \to 0} \frac{\log \mu (B(x,\varepsilon))}{\log \varepsilon}.
$$
For $\mu$-almost every $x$, $\alpha^-(x) \in [0,d]$. Moreover, we have
\begin{align}
\label{e.dimlechar5} \dim_\mathrm{H}^- (\mu) & = \mu - \mathrm{essinf} \, \alpha_\mu^- \, \equiv \sup \{ \alpha : \alpha_\mu^-(x) \ge \alpha \text{ for $\mu$-almost every } x \}, \\
\label{e.dimlechar6} \dim_\mathrm{H}^+ (\mu) & = \mu - \mathrm{esssup} \, \alpha_\mu^- \equiv \inf \{ \alpha : \alpha_\mu^-(x) \le \alpha \text{ for $\mu$-almost every } x \},
\end{align}
compare \cite[Propositions~10.2 and 10.3]{Falc2}.

One can also consider the upper scaling exponent of $\mu$ at $x$,
$$
\alpha_\mu^+(x) = \limsup_{\varepsilon \to 0} \frac{\log \mu (B(x,\varepsilon))}{\log \varepsilon},
$$
which also belongs to $[0,d]$ for $\mu$-almost every $x$. The measure $\mu$ is called exact-dimensional if there is a number $\dim \mu \in [0,d]$ such that $\alpha_\mu^+(x) = \alpha_\mu^-(x) = \dim \mu$ for $\mu$-almost every $x \in \R^d$. In this case, it of course follows that $\dim_\mathrm{H}^+(\mu) = \dim_\mathrm{H}^-(\mu) = \dim \mu$, and tangentially we note that the common value also coincides with the upper and lower packing dimension of $\mu$, which are defined analogously by replacing the Hausdorff dimension of a set in the above definitions by the packing dimension; see \cite{Falc, Falc2, Mattila} for further details.

\bigskip

We are now ready to prove Theorem~\ref{t.singularIDS}. In fact, the theorem will follow quickly from known results once we have established the following simple lemma.

\begin{lemma}\label{l.singularconvolution}
Suppose $\nu_1$ and $\nu_2$ are compactly supported exact-dimensional measures on $\R$ of dimension $d_1$ and $d_2$, respectively. If $d_1 + d_2 < 1$, then the convolution $\nu_1 \ast \nu_2$ is singular.
\end{lemma}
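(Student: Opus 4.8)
The plan is to view the convolution $\nu_1 \ast \nu_2$ as the image of the product measure $\nu_1 \times \nu_2$ on $\R^2$ under the addition map $S : \R^2 \to \R$, $S(x,y) = x+y$, and to use that $S$ is Lipschitz and hence cannot increase Hausdorff dimension. The one point requiring care is the key intermediate fact that $\nu_1 \times \nu_2$ is exact-dimensional on $\R^2$ with $\dim(\nu_1 \times \nu_2) = d_1 + d_2$; this is where exact-dimensionality of the $\nu_i$ (rather than merely agreement of their upper Hausdorff dimensions) is used, since it furnishes a pointwise almost-everywhere statement about balls via the product structure. Once this is in hand the conclusion is immediate, and the remaining ingredients — Lipschitz maps do not increase Hausdorff dimension, and sets of Hausdorff dimension below $1$ are Lebesgue-null — are classical.

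To establish exact-dimensionality of the product, note that by Fubini, for $(\nu_1 \times \nu_2)$-almost every $(x,y)$ the point $x$ is typical for $\nu_1$ and $y$ is typical for $\nu_2$, so $\log \nu_1(B(x,r))/\log r \to d_1$ and $\log \nu_2(B(y,r))/\log r \to d_2$ as $r \downarrow 0$. Working with sup-norm balls on $\R^2$, for which $(\nu_1 \times \nu_2)(B((x,y),r)) = \nu_1(B(x,r)) \, \nu_2(B(y,r))$, one takes logarithms and divides by $\log r < 0$ to get $\log (\nu_1 \times \nu_2)(B((x,y),r)) / \log r \to d_1 + d_2$; the comparison between the sup norm and the Euclidean norm is harmless, as the resulting constants disappear after division by $\log r$. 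Hence the local dimension of $\nu_1 \times \nu_2$ equals $d_1 + d_2$ at $(\nu_1 \times \nu_2)$-a.e.\ point, and by the characterization \eqref{e.dimlechar6} applied on $\R^2$ we obtain $\dim_\mathrm{H}^+(\nu_1 \times \nu_2) = d_1 + d_2$. In particular, for every $\varepsilon > 0$ there is a Borel set $A_\varepsilon \subseteq \R^2$ with $(\nu_1 \times \nu_2)(\R^2 \setminus A_\varepsilon) = 0$ and $\dim_\mathrm{H} A_\varepsilon < d_1 + d_2 + \varepsilon$.

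It remains to push $A_\varepsilon$ forward. Since $S$ is Lipschitz, $\dim_\mathrm{H} S(A_\varepsilon) \le \dim_\mathrm{H} A_\varepsilon < d_1 + d_2 + \varepsilon$ (see, e.g., \cite{Falc, Mattila}). Moreover $S^{-1}(\R \setminus S(A_\varepsilon)) \subseteq \R^2 \setminus A_\varepsilon$, so $(\nu_1 \ast \nu_2)(\R \setminus S(A_\varepsilon)) = (\nu_1 \times \nu_2)(S^{-1}(\R \setminus S(A_\varepsilon))) \le (\nu_1 \times \nu_2)(\R^2 \setminus A_\varepsilon) = 0$, i.e.\ $\nu_1 \ast \nu_2$ is carried by $S(A_\varepsilon)$. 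Choosing $\varepsilon > 0$ with $d_1 + d_2 + \varepsilon < 1$ — possible precisely because $d_1 + d_2 < 1$ — we conclude that $\nu_1 \ast \nu_2$ is supported on a set of Hausdorff dimension strictly less than $1$, hence on a set of zero Lebesgue measure, so $\nu_1 \ast \nu_2$ is singular.
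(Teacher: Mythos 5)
Your proof is correct and follows essentially the same route as the paper: both establish that $\nu_1 \times \nu_2$ is exact-dimensional of dimension $d_1 + d_2$, view $\nu_1 \ast \nu_2$ as its image under the addition map, and conclude that the convolution is carried by a set of Hausdorff dimension at most $d_1 + d_2 < 1$, hence is singular. The only (harmless) difference is that you transfer the dimension bound at the level of carrier sets via the Lipschitz property of $S$, whereas the paper transfers it at the level of pointwise lower scaling exponents and then invokes the essential-supremum characterization \eqref{e.dimlechar6} for $\dim_\mathrm{H}^+(\nu_1 \ast \nu_2)$.
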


\begin{proof}
Note first that the product measure $\nu_1 \times \nu_2$ is exact-dimensional with dimension $d_1 + d_2$. Moreover, the convolution $\nu_1 \ast \nu_2$ can be obtained from $\nu_1 \times \nu_2$ by projection, that is,
$$
\nu_1 \ast \nu_2 (B) = \nu_1 \times \nu_2 \{ (x,y) \in \R^2 : x+y \in B \}.
$$
It follows that for $\nu_1 \ast \nu_2$-almost every $x \in \R$, the lower scaling exponent
$$
\alpha_{\nu_1 \ast \nu_2}^-(x) = \liminf_{\varepsilon \downarrow 0} \frac{\log \left( \nu_1 \ast \nu_2 \left( (x-\varepsilon,x+\varepsilon) \right) \right)}{\log \varepsilon}
$$
is bounded from above by $d_1 + d_2$. This implies that the upper Hausdorff dimension of $\nu_1 \ast \nu_2$,
\begin{align*}
\dim_\mathrm{H}^+(\nu_1 \ast \nu_2) & = \inf \{ \dim_\mathrm{H}(S) : \nu_1 \ast \nu_2(\R \setminus S) = 0 \} \\
& = \nu_1 \ast \nu_2 - \mathrm{esssup} \, \alpha_{\nu_1 \ast \nu_2}^- \\
& \equiv \inf \{ d : \alpha_{\nu_1 \ast \nu_2}^-(x) \le d \text{ for } \nu_1 \ast \nu_2 \text{-almost every } x \},
\end{align*}
is bounded from above by $d_1 + d_2$ (here we used \eqref{e.upperdimmeas} and \eqref{e.dimlechar6}). Since $d_1 + d_2 < 1$ by assumption, $\nu_1 \ast \nu_2$ has a support of Hausdorff dimension strictly less than one and hence of Lebesgue measure zero. This shows that $\nu_1 \ast \nu_2$ is singular.
\end{proof}

\begin{proof}[Proof of Theorem~\ref{t.singularIDS}.]
It was shown in \cite{DGY} that for every $\lambda > 0$, the density of states measure $\nu_\lambda$ is exact-dimensional. Thus, Theorem~\ref{t.singularIDS} is an immediate consequence of Lemma~\ref{l.singularconvolution}.
\end{proof}

\end{document}